\renewcommand{\int}{\mathlarger{\mathlarger\smallint}}
\theoremstyle{definition}
\newtheorem{definition}{Definition}[section]
\newtheorem{notation}[definition]{Notation}
\newtheorem{example}[definition]{Example}
\newtheorem{problem}[definition]{Problem}
\theoremstyle{plain}
\newtheorem{lemma}[definition]{Lemma}
\newtheorem{proposition}[definition]{Proposition}
\newtheorem{theorem}[definition]{Theorem}
\def\institution{}
\def\streetaddress{}
\def\city{}
\def\country{}
\def\postcode{}
\def\state{}
\begin{document}

\title{Punctual Hilbert Scheme and Certified Approximate Singularities}

\author{Angelos Mantzaflaris, Bernard Mourrain}
\email{angelos.mantzaflaris@inria.fr, bernard.mourrain@inria.fr}

\address{%
  \institution{INRIA Sophia Antipolis, Universit\'e C\^ote d'Azur}
  \streetaddress{2004 route des Lucioles, B.P. 93}
  \city{Sophia Antipolis}
  \country{France}
  \postcode{06902}
}

\author{Agnes Szanto}
\email{aszanto@ncsu.edu}
\address{%
 \institution{Dept. of Mathematics, North~Carolina~State~University}
\streetaddress{Campus Box 8205, Raleigh}
\postcode{27965}
 \city{Raleigh}
 \state{NC}
 \country{USA}
}



\keywords{certification, singularity, multiplicity structure, Newton's method, inverse system, multiplication matrix}

\begin{abstract}
  In this paper we provide a new method to certify that a nearby
  polynomial system has a singular isolated root and we compute its multiplicity structure. More precisely, given a polynomial system
  $\fb=(f_1, \ldots, f_N)\in \K[x_1, \ldots, x_n]^N$, we present a
  Newton iteration on an extended deflated system that locally
  converges, under regularity conditions, to a small deformation of $f$
  such that this deformed system has an exact singular root. The
  iteration simultaneously converges to the coordinates of the
  singular root and the coefficients of the so-called inverse system
  that describes the multiplicity structure at the root. We use
  $\alpha$-theory test to certify the quadratic convergence, and to
  give bounds on the size of the deformation and on the approximation
  error. The approach relies on an analysis of the punctual Hilbert
  scheme, for which we provide a new description. We show in
  particular that some of its strata can be rationally parametrized
  and exploit these parametrizations in the certification. We show
  in numerical experimentation how the approximate inverse system can be 
  computed as a starting point of the Newton iterations and the fast
  numerical convergence to the singular root with its multiplicity
  structure, certified by our criteria. 
\end{abstract}

\maketitle

\section{Introduction}

Local numerical methods such as Newton iterations have proved their efficiency to approximate and certify the existence of simple roots. However for multiple roots they dramatically fail to provide fast numerical convergence and certification.
The motivation for this work is to find a method with fast convergence to an exact singular point and its multiplicity structure for a small perturbation of the input polynomials, and to give numerical tests that can certify it. The knowledge of the multiplicity structure  together with a high precision numerical approximation of a singular solution can be valuable information in many problems.

In \cite{Mourrain97} a method called later {\em integration method} is devised to compute the so-called {\em inverse system} or multiplicity structure at a multiple root. It is used in \cite{mm11} to compute an approximation of the inverse system, given an approximation of that root and to obtain a perturbed system that satisfies the duality property. However, this method did not give a way to improve the accuracy of the initial approximation of the root and the corresponding inverse system. In \cite{Hauensteindeflationmultiplicitystructure2016} a new one-step deflation method is presented that gives an overdetermined polynomial system in the coordinates of the roots and the corresponding inverse system, serving as a starting point for the present paper. However, for certification, \cite{Hauensteindeflationmultiplicitystructure2016} refers to the symbolic-numeric method in \cite{AyyildizAkogluCertifyingsolutionsoverdetermined2018} that only works if the input system is given exactly with rational coefficients and have a multiple root with the prescribed multiplicity structure.

In the present paper we give a solution for the following problem:
%
\begin{problem} \label{prob:main} Given a polynomial system $\fb=(f_1,
  \ldots, f_N)\in \K[\bx]^N$ and a point ${\xi}\in
  \CC^n$, deduce an iterative method that converges quadratically to the triple $(\xi^*, \mu^*, \epsilon^*)$ such that $\xi^*\in \CC^n$, $\mu^*$ defines the coefficients of a basis $\bLambda^*=\{\Lambda^*_1, \ldots, \Lambda^*_\mult\}\subset \CC[\bd_{\xi^*}]$ dual to the set $B_{\xi^*}=\{(\bx-\xi^*)^{\beta_1}, \ldots, (\bx-\xi^*)^{\beta_\mult}\}\subset \CC[\bx]$ and $\epsilon^*$ defines a perturbed polynomial system $\fb_{\epsilon^*}:=\fb + \epsilon^*B_{\xi^*}$ with the property that $\xi^*$ is an exact multiple root of $\fb_{\epsilon^*}$ with inverse system $\bLambda^*$. Furthermore, certify this property and give an upper bound on the size of the perturbation $\|\epsilon^*\|$.
\end{problem}

The difficulty in solving Problem \ref{prob:main} is that known polynomial systems defining the coordinates of the roots and the inverse system are overdetermined, and we need a square subsystem of it in the Newton iterations to guarantee the existence of a root together with the quadratic convergence. Thus, roots of this square subsystem may not be exact roots of the complete polynomial system, and we cannot certify numerically that they are approximations of a root of the complete system. This is the reason why we introduce the variables $\epsilon$ that allow perturbation of the input system. One of the goals of the present paper is to understand what kind of perturbations are needed and to bound their magnitude.

Certifying the correctness of the multiplicity structure that the numerical iterations converge to poses a more significant challenge: the set of parameter values  describing an affine point with multiplicity $\mult$ forms a projective variety called the {\em punctual Hilbert scheme}. The goal is to certify that we converge to a point on this variety. We study an affine subset of the punctual Hilbert scheme and give a new description using multilinear quadratic equations that have a triangular structure. These equations appear in our deflated polynomial system, have integer coefficients, and have to be satisfied exactly without perturbation, otherwise the solution does not define a proper inverse system, closed under derivation. Fortunately, the structure allowed us to define a rational parametrization of a strata of the punctual Hilbert scheme, called the {\em regular} strata. In turn, this rational parametrization allows certification when converging to a point on this regular strata.

Our method comprises three parts: first, we apply the Integration Method (Algorithm 1) with input $\fb$ and ${\xi}$ to compute an approximation of the multiplicity structure, second, an analysis and certification part (see Section 6 and Algorithm 2), and third,  a numerical iteration part converging to the exact multiple root with its multiplicity structure for an explicit perturbation of the input system (see Section 5).


\subsubsection*{Related Work}

There are many works in the literature studying the certification of  isolated singular roots of polynomial systems. One approach is to give  {\em separation bounds} for isolated roots, i.e. a bound that guarantees that there is exactly one root within a neighborhood of  a given point. Worst case separation bounds for square polynomial systems with support in given polytopes and rational coefficients are presented in \cite{emiris:inria-00393833}.
In the presence of singular roots, turned into root clusters after perturbations, these separation bounds separate the clusters from each other and bound the cluster size. \cite{Yakoubsohn2000, Yakoubsohn2002,Giustietal2005} give separation bounds and numerical algorithms to compute clusters of zeroes of univariate polynomials. \cite{DedieuShub2001} extends $\alpha$-theory and gives separation bounds for simple double zeroes of polynomial systems, \cite{GLSY07} extend these results to zeroes of embedding dimension one.

Another approach, called deflation, comprises of transforming the singular root into a regular root of a new system and to apply certification techniques on the new system.
\cite{KanzawaOishi1997} uses a square deflated system to prove the existence of singular solutions.
\cite{lvz06} devises a deflation technique that adds new variables to the systems for isolated singular roots that accelerates Newton's method and \cite{lvz08} modifies this to compute the multiplicity structure.
\cite{RG10} computes error bounds that guarantee the existence of a simple double root within that error bound from the input, \cite{LiZhi2013,LiZhi2014}  generalizes \cite{RG10} to the breadth one case and give an algorithm to compute such error bound. \cite{LiSang2015} gives verified error bounds for isolated and some non-isolated singular roots using higher order deflations. \cite{zeng05,Wu:2008:CMS:1390768.1390812,Zeng2009,WuZhi2011,DaytonLiZeng11,HaoSomZeng2013} give deflation techniques based on numerical linear algebra on the Macaulay matrices that compute the coefficients of the inverse system, with improvements using the closedness property of the dual space.
\cite{GiuYak13,GiustiYak2018} give a new deflation method that does not introduce new variables and extends $\alpha$-theory to general isolated multiple roots for the certification to a simple root of a subsystem of the overdetermined deflated system.
In \cite{Hauensteindeflationmultiplicitystructure2016} a new deflated system is presented, its simple roots correspond to the isolated singular points with their multiplicity structure.
A somewhat different approach is given in
\cite{AyyildizAkogluCertifyingsolutionsoverdetermined2018}, where they use a symbolic-numeric certification techniques that certify that  polynomial systems with rational coefficients have  exact isolated singular roots.
More recently, \cite{LeeLiZhi2019} design a square Newton iteration and provide separation bounds for roots when the deflation method of \cite{lvz06} terminates in one iteration, and give bounds for the size of the clusters.

The certification approach that we propose is based on an algebraic analysis of some strata of the punctual Hilbert scheme. Some of its geometric properties have been investigated long time ago, for instance in \cite{BrianconDescriptionHilb1977, IarrobinoPunctualHilbertschemes1977,BrianconDimensionpunctualHilbert1978} or more recently in the plane \cite{Bejleritangentspacepunctual2017}. However, as far as we know, the effective description that we use and the rational parametrization of the regular strata that we compute have not been developed previously.

\section{Preliminaries}

Let $\fb:= (f_1, \ldots, f_N)\in \Rg^N$ with  $\bx =(x_1, \ldots, x_n)$. Let $\bxi=(\xi_1, \ldots, \xi_n)\in \CC^n$ be an isolated multiple  root of $\fb$.
Let $I=\langle f_1, \ldots, f_N\rangle$, $\m_{\xi}$ be the maximal
ideal at ${\xi}$ and $Q$ be the primary component of $I$ at $\bxi$ so
that $\sqrt{Q}=\m_{\xi}$. The shifted monomials at $\xi$ will
be denoted for $\alpha=(\alpha_{1},\ldots,
\alpha_{n}) \in \NN^{n}$ by
$$\bx_{\xi}^{\alpha}:= (x_{1}-\xi_{1})^{\alpha_{1}}\cdots
(x_{1}-\xi_{n})^{\alpha_{n}}.$$

Consider the ring of power series $\K[[\bdv_\xi]]:= \K[[\dv_{1,\xi}, \ldots, \dv_{n,\xi}]]$ and we denote
$\bdv^{\bbeta}_{\xi}:=\dv_{1, \xi}^{\beta_1}\cdots \dv_{n, \xi}^{\beta_n}$,
with $\bbeta=(\beta_1, \ldots, \beta_n)\in \NN^n$.
We identify $\CC[[\bdv_\xi]]$ with the dual space $\Rg^*$  by
considering the action of $\bdv^{\bbeta}_{\xi}$ on polynomials as derivations and evaluations at $\bxi$,  defined as
\begin{equation}\label{partial}
\bdv^{\bbeta}_{ \xi}(p)
:= \bpartial^{\bbeta}(p)\bigg|_{\bxi}=\frac{\partial^{|\bbeta|  } p}{\partial x_1^{\beta_1}{\cdots} \partial x_n^{\beta_n}} (\bxi) \quad \text{ for } p\in \CC[\bx].
\end{equation}
Hereafter, we reserve the notation $\bdv$ and $\dv_i$ for the dual
variables while $\bpartial$ and $\partial_{x_i}$ for derivation. We
indicate the evaluation at $\xi\in \CC^{n}$ by writing $\dv_{i,\xi}$
and $\bdv_{\xi}$, and  for $\xi=0$ it will be denoted by $\bdv$. The
derivation with respect to the variable $\dv_{i, \xi}$ in $\K[[\bdv_\xi]]$ is denoted
$\partial_{\dv_{i, \xi}}$ $(i=1,\ldots, n)$.
Observe that
$$
 \frac{1}{\bbeta!} \bdv^{\bbeta}_{ \xi}((\bx-\bxi)^\alpha)=\begin{cases} 1  & \text{ if } \alpha=\bbeta, \\
0 & \text{ otherwise,}
\end{cases}
$$
where $\bbeta! = \beta_1!\cdots \beta_n!$.

For $p\in \Rg$ and $ \Lambda\in \K[[\bdv_\xi]]=\Rg^{*}$,
let
$
p\cdot \Lambda: q \mapsto \Lambda (p\,q).
$
We check that $p=(x_i-\xi_i)$ acts as a derivation on
$\CC[[\bdv_\xi]]$: $(x_{i}-\xi_{i}) \cdot \bdv^{\bbeta}_{ \xi}= \partial_{\dv_{i, \xi}} (\bdv^{\bbeta}_{ \xi})=\beta_i \bdv_\xi^{\bbeta-\e_i}$.
Throughout the paper we use the notation $\e_1, \ldots, \e_n$ for the standard basis of $\CC^n$ or for a canonical basis of any vector space $V$ of dimension $n$. We will also use integrals of polynomials in $\CC[[\bdv_\xi]]$ as follows: for $ \Lambda\in \K[[\bdv_\xi]] $ and $k=1, \ldots, n$, $\int_k\Lambda$ denotes the polynomial $\Lambda^* \in \K[[\bdv_\xi]]$ such that $\partial_{\dv_{k, \xi}} (\Lambda^*)=\Lambda$ and $\Lambda^*$ has no constant term. We introduce the following shorthand notation
\begin{eqnarray}\label{eq:shortint}
\tint{k}{\Lambda}:= \int_k\Lambda(\dv_{1, \xi}, \ldots, \dv_{k, \xi}, 0, \ldots, 0).
\end{eqnarray}

For an ideal $I\subset \K[\bx]$, let $I^{\perp}=\{\Lambda \in
\K[[\bdv_{\xi}]]\mid \forall p\in I, \Lambda (p)=0\}$.
The vector space $I^{\perp}$ is naturally identified with the dual
space of $\Rg/I$.
We check that $I^{\perp}$ is a vector subspace of $\K[[\bdv_{\xi}]]$
which is closed under the derivations ${\partial_{\dv_{i, \xi}}}$ for $i=1, \ldots, n$.

\begin{lemma}\label{lem:primcomp}
If $Q$ is a $\m_{\xi}$-primary isolated component of $I$, then $Q^{\perp}=I^{\perp}\cap\K[\bdv_{\xi}]$.
\end{lemma}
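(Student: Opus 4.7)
\medskip
\noindent\textbf{Proof plan.}
The plan is to prove the two inclusions separately, with the reverse inclusion being the substantive one. Throughout I will use the primary decomposition $I = Q \cap Q'$, where $Q' = \bigcap_{j\geq 2} Q_j$ collects the primary components whose radicals are maximal ideals $\m_{j} \neq \m_{\xi}$ (so $Q'$ is coprime to any power of $\m_{\xi}$).

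\medskip
\noindent\emph{Forward inclusion $Q^{\perp} \subset I^{\perp} \cap \K[\bdv_{\xi}]$.} Since $I \subset Q$, every functional annihilating $Q$ annihilates $I$, giving $Q^{\perp} \subset I^{\perp}$. Because $Q$ is $\m_{\xi}$-primary, there exists $k$ with $\m_{\xi}^{k} \subset Q$, hence $Q^{\perp} \subset (\m_{\xi}^{k})^{\perp}$. An element of $\K[[\bdv_{\xi}]]$ annihilating $\m_{\xi}^{k}$ must be a polynomial of degree $<k$ in $\bdv_{\xi}$: any higher-order monomial $\bdv_{\xi}^{\beta}$ with $|\beta|\ge k$ pairs nontrivially with $(\bx-\xi)^{\beta} \in \m_{\xi}^{k}$ via the formula in the preamble. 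Thus $Q^{\perp} \subset \K[\bdv_{\xi}]$.

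\medskip
\noindent\emph{Reverse inclusion $I^{\perp} \cap \K[\bdv_{\xi}] \subset Q^{\perp}$.} Fix $\Lambda \in I^{\perp} \cap \K[\bdv_{\xi}]$ and let $d = \deg \Lambda$. The same pairing argument as above shows that $\Lambda$ annihilates $\m_{\xi}^{d+1}$, so $\Lambda$ lies in $(I + \m_{\xi}^{d+1})^{\perp}$. The goal is therefore to establish the containment
\[
Q \;\subset\; I + \m_{\xi}^{d+1}.
\]

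\medskip
\noindent\emph{Key step: coprimality.} The ideals $Q_{j}$ for $j \geq 2$ satisfy $\sqrt{Q_{j}} + \sqrt{\m_{\xi}^{d+1}} = \m_{j} + \m_{\xi} = \K[\bx]$, so $Q_{j} + \m_{\xi}^{d+1} = \K[\bx]$ (standard fact: comaximality of radicals lifts to comaximality of the ideals). Multiplying these relations and using $\prod_{j\geq 2}(Q_{j}+\m_{\xi}^{d+1}) \subset Q' + \m_{\xi}^{d+1}$, we get $Q' + \m_{\xi}^{d+1} = \K[\bx]$. Pick a decomposition $1 = u + v$ with $u \in Q'$ and $v \in \m_{\xi}^{d+1}$. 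Then for any $q \in Q$,
\[
q = qu + qv, \qquad qu \in Q \cap Q' = I, \qquad qv \in \m_{\xi}^{d+1},
\]
yielding the desired inclusion. Applying $\Lambda$ gives $\Lambda(q)=0$ for every $q\in Q$, so $\Lambda \in Q^{\perp}$.

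\medskip
The only subtle point I expect is the bookkeeping between the truncation level $d+1$ (which depends on $\Lambda$) and the primary decomposition; everything else is formal. The step I would pay most attention to is the verification that a polynomial in $\bdv_{\xi}$ of degree $\leq d$ annihilates $\m_{\xi}^{d+1}$, since it is exactly this ``finite-degree'' feature of $\K[\bdv_{\xi}]$ (as opposed to $\K[[\bdv_{\xi}]]$) that isolates the $\m_{\xi}$-primary component from the rest.
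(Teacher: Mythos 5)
The paper states Lemma~\ref{lem:primcomp} without proof, treating it as a known fact about inverse systems; your argument supplies a complete and correct proof, so there is nothing to compare against except correctness. The overall strategy is the standard one: the forward inclusion uses that $\m_\xi^k\subset Q$ forces $Q^\perp$ into polynomials of degree $<k$, and the reverse inclusion truncates at degree $d=\deg\Lambda$ and reduces matters to the comaximality $Q'+\m_\xi^{d+1}=\K[\bx]$, from which $Q\subset I+\m_\xi^{d+1}$ follows by the $1=u+v$ trick. All of the steps check out, including the identity $\Lambda((\bx-\xi)^\alpha)=c_\alpha$ that makes ``annihilates $\m_\xi^{k}$'' equivalent to ``degree $<k$,'' the lifting of comaximality from radicals to ideals, and the product-containment $\prod_j(Q_j+\m_\xi^{d+1})\subset Q'+\m_\xi^{d+1}$.

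One small imprecision worth flagging: you assume the radicals $\sqrt{Q_j}$ of the other primary components are maximal ideals $\m_j$. In the generality of the paper (arbitrary $N$ equations, $\xi$ only assumed to be an \emph{isolated} root), the ideal $I$ need not be zero-dimensional, so some $\sqrt{Q_j}$ could have positive dimension. This does not damage the argument --- all you actually need is $\sqrt{Q_j}+\m_\xi=\K[\bx]$, which follows from maximality of $\m_\xi$ alone together with $\sqrt{Q_j}\ne\m_\xi$ (the latter because $Q$ is the unique $\m_\xi$-primary component of an irredundant decomposition, and $\m_\xi$ cannot strictly contain or be strictly contained in an associated prime of $I$ since $\m_\xi$ is a maximal minimal prime of $I$). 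It would be cleaner to phrase the key step that way rather than invoking $\m_j$.
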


This lemma shows that to compute $Q^{\perp}$, it suffices to compute
all polynomials of $\K[\bdv_{\xi}]$ which are in $I^{\perp}$.
Let us denote this set $\DDD= I^{\perp}\cap\K[\bdv_{\xi}]$. It is a
vector space stable under the derivations
$\partial_{\dv_{i, \xi}}$. Its dimension is the dimension of
$Q^{\perp}$ or $\Rg/Q$, that is the {\em multiplicity} of
$\xi$, denoted $\mult_{\xi} (I)$, or simply $\mult$  if $\xi$ and $I$ is clear from the context.

For an element $\Lambda(\bdv_\xi) \in \K[\bdv_\xi]$ we
define the degree or {\em order} $\ord(\Lambda)$ to be the maximal
$|\bbeta|$ s.t. $\bdv^{\bbeta}_{ \xi}$ appears in
$\Lambda(\bdv_\xi)$ with non-zero coefficient.

For $t\in \NN$, let $\DDD_{t}$ be the elements of $\DDD$ of order $\leq t$.
As $\DDD$ is of dimension $\mult$, there exists a smallest $t\geq 0$ s.t.
$\DDD_{t+1}= \DDD_{t}$. Let us call this smallest $t$, the {\em nil-index} of
$\DDD$ and denote it by $\order_{\xi} (I)$, or simply by $\order$. As $\DDD$ is stable by the derivations
$\partial_{d_{i, \xi}}$,
we easily check that for $t\geq \order_{\xi} (I)$, $\DDD_{t}=\DDD$ and
that $\order_{\xi} (I)$ is the maximal degree of elements of $\DDD$.


Let $B=\{\bx_{\xi}^{\bbeta_{1}}, \ldots, \bx_{\xi}^{\bbeta_{\mult}}\}$
be a basis of $\Rg/Q$. We can identify the elements of
$\K[\bx]/Q$ with the elements of the vector space $\sp_\K( B)$.
We define the normal form $N(p)$ of a polynomial $p$ in $\Rg$ as the
unique element $b$ of ${\rm span}_\K(B)$ such
that $p-b\in Q$. Hereafter, we are going to identify the elements of
$\Rg/Q$ with their normal form in $\sp_\K (B)$.
For $\balpha\in \NN^{n}$, we will write the normal form of
$\bx_{\xi}^{\balpha}$ as
\begin{equation}\label{eq:nf}
N(\bx_{\xi}^{\balpha})= \sum_{i=1}^{\mult} \mu_{\bbeta_{i},\balpha}\,
  \bx_{\xi}^{\bbeta_{i}}.
\end{equation}

\subsection{The multiplicity structure}\label{Sec:PointMult}

We start this subsection by recalling the definition of graded
primal-dual pairs of bases for the space $\K[\bx ]/Q$ and its dual.
The following lemma defines the same dual space as in
e.g.~\cite{zeng05,DaytonLiZeng11,LiZhi2014}, but we emphasize on a
primal-dual basis pair to obtain a concrete isomorphism between the
coordinate ring and the dual space.

\begin{lemma}[Graded primal-dual basis pair]\label{pdlemma}
Let $\fb$, $\bxi$,  $Q$, $\DDD$, $\mult= \mult_\xi(\fb)$ and $\order=\order_\xi(\fb)$ be as above.
Then there exists a primal-dual basis pair $(B,\bLambda)$ of the local ring  $\K[\bx ]/ Q$ with the following properties:
\begin{enumerate}
\item The {\em primal basis} of the local ring  $\K[\bx ]/Q$ has the form
\begin{equation}\label{pbasis}
B:=\left\{ \bx_{\xi}^{\bbeta_1},  \bx_{\xi}^{\bbeta_2},\ldots, \bx_{\xi}^{\bbeta_{\mult}}\right\}.
\end{equation}
We can assume that  $\beta_1=0$ and that the ordering of the elements
in $B$ by increasing degree.  Define the set of exponents in $B$ as $\label{E}
E:=\{\bbeta_1, \ldots, \bbeta_{\mult}\}\subset \NN^n$.
\item The unique {\em dual basis} $\bLambda=\{ \Lambda_{1},
  \Lambda_{2},\ldots$, $\Lambda_{{\mult}} \}$ of $\DDD\subset \CC[\bdv_\xi]$
  dual to $B$ has the form
$
\Lambda_{i}=\frac{1}{\bbeta_{i} !}\bdv_\bxi^{\bbeta_{i}}  +\sum_{|\balpha|\leq  |\bbeta_{i}|\atop \balpha\not\in E}\mu_{\bbeta_{i}, \balpha}\; \frac{1}{\bbeta !}\bdv_\bxi^{\balpha}.
$
\item We have
$0=\ord(\Lambda_{1}) \leq \cdots \leq \ord(\Lambda_{\mult})$, and  for  all    $0\leq t\leq \order$ we have
$
\DDD_t=\sp\left\{ \Lambda_{j}\;:\;  \ord(\Lambda_{{j}})\leq t \right\},$ where $\DDD_{t}$ denotes the elements of $\DDD$ of order $\leq t$, as above.
\end{enumerate}
\end{lemma}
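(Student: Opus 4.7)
The plan is to construct the primal-dual pair by exploiting the order filtration $0=\DDD_{-1}\subset\DDD_0\subseteq\DDD_1\subseteq\cdots\subseteq\DDD_{\order}=\DDD$ on $\DDD$, selecting pivot exponents level by level, and then performing a Gaussian-style elimination across degrees to bring the dual basis into the stated triangular shape.

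First I would deal with item (1). Since $f_i(\bxi)=0$ for all $i$, the constant $1$ lies in $\DDD$, so $\DDD_0=\sp(1)$, which forces $\bbeta_1=0$. For $t\geq 1$, the graded piece $\DDD_t/\DDD_{t-1}$ embeds, via the degree-$t$ leading form, into the space spanned by $\{\bdv_{\bxi}^{\balpha}:|\balpha|=t\}$. Applying reduced row echelon form to a basis of that image yields a pivot set $E_t\subset\NN^n$ of size $d_t:=\dim(\DDD_t/\DDD_{t-1})$. Setting $E:=\bigsqcup_{t=0}^{\order}E_t$, enumerating its elements in order of increasing degree, and defining $B$ as in (1) uses $\sum_t d_t=\mult$ by Lemma~\ref{lem:primcomp}.

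Next I would build $\bLambda$ compatibly with this pivot selection. For each $\bbeta\in E_t$, the pivot property in $\DDD_t/\DDD_{t-1}$ supplies an element $\Lambda\in\DDD_t$ whose homogeneous degree-$t$ part, after an obvious rescaling, equals $\frac{1}{\bbeta!}\bdv_{\bxi}^{\bbeta}+\sum_{|\balpha|=t,\,\balpha\notin E_t}c_\balpha\frac{1}{\balpha!}\bdv_{\bxi}^{\balpha}$. Processing the pivots in the order imposed by increasing degree, I then subtract scalar multiples of the already constructed $\Lambda_{\bbeta'}$ with $\bbeta'\in E_{t'}$, $t'<t$, so as to kill the coefficient of $\bdv_{\bxi}^{\bbeta'}$ in $\Lambda$; because $\ord(\Lambda_{\bbeta'})=t'<t$, these subtractions do not perturb the degree-$t$ part just fixed. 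The resulting $\Lambda_i$ has precisely the form required by (2). Property (3) then follows by induction on $t$: at each level $\{\Lambda_j:\ord(\Lambda_j)\leq t\}$ lies in $\DDD_t$ and projects to a basis of $\DDD_t/\DDD_{t-1}$, hence spans $\DDD_t$.

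It remains to verify that $B$ is indeed the basis dual to $\bLambda$. By construction, the only exponent $\balpha\in E$ appearing in $\Lambda_i$ is $\bbeta_i$ itself; combined with the identity $\frac{1}{\balpha!}\bdv_{\bxi}^{\balpha}(\bx_{\bxi}^{\bbeta_j})=\delta_{\balpha,\bbeta_j}$, this gives $\Lambda_i(\bx_{\bxi}^{\bbeta_j})=\delta_{ij}$. Since $\bLambda$ is a basis of $\DDD\cong (\K[\bx]/Q)^{*}$ by Lemma~\ref{lem:primcomp} and the pairing matrix is the identity, $B$ is a basis of $\K[\bx]/Q$. The main subtlety I expect is exactly in (2): ensuring that the \emph{sub-leading} monomials of each $\Lambda_i$ are supported outside all of $E$, not merely outside $E_t$. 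This is what forces the pivot selection and elimination to be carried out degree-by-degree rather than in one sweep; a single echelon reduction on an arbitrary basis of $\DDD$ would give triangularity within each degree block but would not automatically clear the off-diagonal entries indexed by $E_{t'}$ with $t'<t$.
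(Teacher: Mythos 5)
Your construction is correct and is essentially the paper's own one-sentence recipe (the paragraph immediately following the lemma statement): pick pivot monomials that are the leading terms of a basis of $\DDD$ with respect to a graded monomial ordering on $\K[\bdv]$, and bring the coefficient matrix into (reduced) row echelon form. Your explicit passage through the filtration $\DDD_t/\DDD_{t-1}$ is a careful degree-by-degree unpacking of that same reduction; the one caveat is that the concern raised in your final paragraph is overstated---a single \emph{reduced} row echelon form with columns ordered by a graded monomial order already clears every pivot-column entry, above and below, in one sweep, so the layered elimination and the one-pass RREF are equivalent and both yield exactly the form in item (2).
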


A graded primal-dual basis pair $(B,\bLambda)$ of $\DDD$ as described in Lemma \ref{pdlemma} can be obtained from any basis $\tilde{\bLambda}$ of ${\DDD}$ by first choosing pivot elements that are the leading monomials with respect to a graded monomial ordering on $\K[\bdv]$, these leading monomials define $B$, then transforming  the coefficient matrix of $\tilde{\bLambda}$ into row echelon form using the pivot leading coefficients, defining $\bLambda$.

A monomial set $B$ is called a {\em graded primal basis} of $\fb$ at $\xi$
if there exists $\bLambda\subset \CC[\bdv_\xi]$ such that
$(B,\bLambda)$ is a graded primal-dual basis pair and $\bLambda$
is complete for $\fb$ at $\xi$.

Next we describe the so-called {\em integration method} introduced  in \cite{Mourrain97,mm11} that computes a graded pair  of  primal-dual bases as in Lemma \ref{pdlemma} if the root $\xi$ is given.
The integration method performs the computation of a basis  order by
order. We need the following proposition, a new version of
\cite[Theorem 4.2]{Mourrain97}: 

\begin{proposition}\label{prop:integral}
Let $\Lambda_1, \ldots, \Lambda_s\in \CC[\bdv_\xi]$ and assume that $\ord(\Lambda_i)\leq t$ for some $t\in \NN$. Suppose that the subspace $\DDD:={\rm span} (\Lambda_1, \ldots, \Lambda_s)\subset  \CC[\bdv_\xi]$ is closed under derivation.  Then $\Delta\in\CC[\bdv_\xi]$ with no constant term satisfies
$\partial_{\dv_{k}}(\Delta)\in  \DDD$ for all $k=1, \ldots, n$ if and only if $\Delta$ is of the form
\begin{eqnarray}\label{eq:closed}
\Delta =\sum_{i=1}^s\sum_{k=1}^n \nu^k_{i} \tint{k}{ \Lambda_i}
\end{eqnarray}
for some $\nu^k_{i}\in \CC$ satisfying
\begin{eqnarray}\label{eq:commute}
\sum_{i=1}^s\nu^k_{i}\partial_{\dv_{l}}(\Lambda_i)-\nu^l_{i}\partial_{\dv_{k}}(\Lambda_i)=0 \;\text{ for }\;1\leq k<l\leq n.
\end{eqnarray}
Furthermore, (\ref{eq:closed}) and (\ref{eq:commute}) implies that
\begin{eqnarray}\label{eq:deriv}
\partial_{\dv_{k}}(\Delta)=\sum_{i=1}^s \nu^k_{i}\Lambda_i \quad \text{ for } k=1, \ldots, n.
\end{eqnarray}
\end{proposition}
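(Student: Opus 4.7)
The plan is to establish the ``furthermore'' identity (\ref{eq:deriv}) first, since both directions of the stated equivalence will then follow as short corollaries. Conceptually this is a polynomial-level Poincar\'e lemma adapted to the operators $\tint{k}{\cdot}$: writing $\Psi_k := \sum_i \nu_i^k \Lambda_i$, condition (\ref{eq:commute}) is exactly the closedness $\partial_{\dv_l}\Psi_k = \partial_{\dv_k}\Psi_l$, and the ansatz (\ref{eq:closed}) is an explicit primitive.

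For the key calculation I would fix $k$ and split $\partial_{\dv_k}\Delta$ according to whether $j < k$, $j = k$, or $j > k$. Since $\tint{j}{\Lambda_i} \in \CC[\dv_1,\ldots,\dv_j]$, the $j < k$ terms vanish. The $j = k$ term reduces, via $\partial_{\dv_k}\int_k = \mathrm{id}$, to $\Lambda_i|_{\dv_{k+1}=\cdots=\dv_n=0}$. For $j > k$, $\partial_{\dv_k}$ commutes both with the truncation $\dv_{j+1}=\cdots=\dv_n=0$ (since $k\notin\{j+1,\ldots,n\}$) and with $\int_j$ (since $k\neq j$), so $\partial_{\dv_k}\tint{j}{\Lambda_i} = \tint{j}{\partial_{\dv_k}\Lambda_i}$. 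I would then apply (\ref{eq:commute}) to exchange $\sum_i\nu_i^j\partial_{\dv_k}\Lambda_i$ with $\sum_i\nu_i^k\partial_{\dv_j}\Lambda_i$, and use $\int_j\partial_{\dv_j}g = g - g|_{\dv_j=0}$ to get $\tint{j}{\partial_{\dv_j}\Lambda_i} = \Lambda_i|_{\dv_{j+1}=\cdots=0} - \Lambda_i|_{\dv_j=\cdots=0}$. Summing over $j = k+1,\ldots,n$ then telescopes to $\Lambda_i - \Lambda_i|_{\dv_{k+1}=\cdots=0}$, which combined with the $j=k$ contribution produces $\sum_i\nu_i^k\Lambda_i$, proving (\ref{eq:deriv}).

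Given (\ref{eq:deriv}), both directions of the equivalence are immediate. The $\Leftarrow$ direction is trivial, since $\sum_i\nu_i^k\Lambda_i \in \DDD$. For $\Rightarrow$, I would choose any expressions $\partial_{\dv_k}\Delta = \sum_i \nu_i^k\Lambda_i$ granted by the hypothesis; the commutation $\partial_{\dv_l}\partial_{\dv_k}\Delta = \partial_{\dv_k}\partial_{\dv_l}\Delta$ forces (\ref{eq:commute}) on the chosen coefficients. Setting $\Delta^* := \sum_{i,k}\nu_i^k\tint{k}{\Lambda_i}$, the already-proved identity (\ref{eq:deriv}) gives $\partial_{\dv_k}(\Delta-\Delta^*)=0$ for every $k$, so $\Delta - \Delta^*$ is a scalar; since each $\tint{k}{\Lambda_i}$ has every monomial of positive degree in $\dv_k$ and $\Delta$ has no constant term by hypothesis, we conclude $\Delta = \Delta^*$. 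I expect the main subtlety to lie in the telescoping bookkeeping: the fact that $\tint{k}{\cdot}$ truncates the higher-indexed variables \emph{before} integrating in $\dv_k$ is precisely what makes the interior restrictions $\Lambda_i|_{\dv_{j+1}=\cdots=0}$ cancel pairwise between consecutive values of $j$, producing the canonical primitive modulo constants.
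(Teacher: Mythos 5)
Your proof is correct, and it takes a genuinely different route from the paper's. The paper's argument begins with the monomial-level decomposition $\Delta = \Delta_1 + \cdots + \Delta_n$ with $\Delta_i \in \CC[\dv_i,\ldots,\dv_n]\setminus\CC[\dv_{i+1},\ldots,\dv_n]$, and runs an induction on $k$ over the partial sums $\sigma_k = \Delta_1 + \cdots + \Delta_k$ to construct the primitive and verify its derivative; the converse direction then repeats essentially the same bookkeeping with barred quantities $\bar\Delta_k$, $\bar\sigma_k$. You instead isolate the ``furthermore'' identity \eqref{eq:deriv} as the single load-bearing computation: splitting $\partial_{\dv_k}\tint{j}{\Lambda_i}$ by $j<k$, $j=k$, $j>k$, invoking \eqref{eq:commute} to trade $\nu_i^j\partial_{\dv_k}\Lambda_i$ for $\nu_i^k\partial_{\dv_j}\Lambda_i$ when $j>k$, and letting the restrictions $\Lambda_i|_{\dv_{j+1}=\cdots=0}$ telescope. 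Once \eqref{eq:deriv} is in hand, both directions of the equivalence collapse to a few lines, with uniqueness of the primitive modulo constants (the Poincar\'e-lemma viewpoint) doing the work in the forward direction. Your version buys a cleaner logical structure and makes transparent why the graded truncation inside $\tint{k}{\cdot}$ is exactly what produces cancellation; the paper's version is closer to the original integration-method exposition in \cite{Mourrain97} and exhibits the primitive by an explicit recursive formula for $\sigma_k$, which is arguably more algorithmic in flavor. One small point worth making explicit if you write this up: the definition of $\int_k$ in the paper (``$\Lambda^*$ with $\partial_{\dv_k}\Lambda^* = \Lambda$ and no constant term'') only pins $\Lambda^*$ down once one adopts the intended normalization that $\int_k$ adds no monomials independent of $\dv_k$ (i.e.\ $\int_k \dv^\alpha = \dv^{\alpha+\e_k}/(\alpha_k+1)$); this is exactly the fact you rely on when asserting that $\Delta^*$ has no constant term, and also what underlies $\partial_{\dv_j}\Delta_j^* = \Lambda_j^*$ in the paper's own proof.
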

\begin{proof}
  Suppose  $\Lambda\in\CC[\bdv]$ with no constant term satisfies $\partial_{\dv_{k}}(\Lambda)\in  \DDD$ for all $k=1, \ldots,n$. To prove  (\ref{eq:closed}), we can proceed exactly as  in the proof of \cite[Theorem 4.2]{Mourrain97}: we write $\Delta$ uniquely as
$$
\Delta=\Delta_1(d_{1}, \ldots, d_{n})+ \Delta_2(d_{2}, \ldots, d_{n})+\cdots, +\Delta_n( d_{n})
$$
with $\Delta_i\in \CC[d_{i}, \ldots, d_{n}]\setminus \CC[d_{i+1}, \ldots, d_{n}]$. Then $\int_i \partial_{d_{i}}\Delta_i=\Delta_i$. Then we prove that by induction on $k$ that if $\sigma_k:=\Delta_1+\cdots + \Delta_k$ then
$$
\Delta_k=\sum_{j=1}^s\nu_j^k\int _k\Lambda_j - \left(\sigma_{k-1}-\sigma_{k-1}|_{d_k=0}\right)
$$
and
\begin{eqnarray*}
\sigma_k&=&\Delta_k+\sigma_{k-1}=\sum_{j=1}^s\nu_j^k\int _k\Lambda_j + \sigma_{k-1}|_{d_k=0}\\
&=& \sum_{j=1}^s\nu_j^k\int _k\Lambda_j +\sum_{j=1}^s\nu_j^{k-1}\int _k\Lambda_j|_{d_k=0}+\cdots+ \sum_{j=1}^s\nu_j^{1}\int _k\Lambda_j|_{d_k=0,\cdots d_2=0}.
\end{eqnarray*}
Conversely, suppose that $\Lambda\in\CC[\bdv]$ with no constant term is of the form  (\ref{eq:closed}) satisfying (\ref{eq:commute}). Define $\bar{\Delta}_1=\bar{\sigma}_1:= \sum_{j=1}^s\nu_j^1\int _1\Lambda_j $ and for $k=2, \ldots n$ define
$$\bar{\Delta}_k:= \sum_{j=1}^s\nu_j^k\int _k\Lambda_j - \left(\sigma_{k-1}-\sigma_{k-1}|_{d_k=0}\right)$$ and $\bar{\sigma}_k:=\bar{\Delta}_1+\cdots +\bar{\Delta}_k$.
Then  in the proof of \cite[Theorem 4.2]{Mourrain97} it is shown  that $\bar{\Delta}_k \in \CC[d_{k}, \ldots, d_{n}]\setminus \CC[d_{k+1}, \ldots, d_{n}]$ and
$$
\bar{\sigma}_k= \sum_{j=1}^s\nu_j^k\int _k{\Lambda}_j +\sum_{j=1}^s\nu_j^{k-1}\int _k{\Lambda}_j|_{d_k=0}+\cdots+ \sum_{j=1}^s\nu_j^{1}\int _k{\Lambda}_j|_{d_k=0,\cdots d_2=0}
$$
so we get that
$\partial_{d_k}(\Lambda)=\partial_{d_k}(\bar{\sigma}_k)=\sum_{j=1}^s\nu_j^k{\Lambda}_j\in
\DDD_t$ as claimed.
\end{proof}

Let $Q$ be a $\m_{\xi}$-primary ideal. Proposition \ref{prop:integral} implies that if $\bLambda
  =\{\Lambda_{1},\ldots, \Lambda_{r}\}\subset \CC[\bdv_\xi]$ with $\Lambda_{1}=1_\xi$ is a basis of
  $Q^{\perp}$, dual to the basis $B=\{\bx_{\xi}^{\bbeta_{1}},\ldots,
  \bx_{\xi}^{\bbeta_{r}} \}\subset \Rg$ of $\Rg/Q$ with $\ord(\Lambda_{i})=|\bbeta_{i}|$, then there exist $\nu_{i,j}^k\in \CC$ such that
$$
\partial_{d_{k}}(\Lambda_{i})=\sum_{|\bbeta_{j}|<|\bbeta_{i}|}
\nu_{i,j}^{k} \, \Lambda_{j}.
$$ Therefore, the matrix $\mM_{k}$
of the multiplication map $M_k$ by $x_{k}-\xi_{k}$ in the basis $B$ of $\Rg/Q$ is
$$
\mM_{k} = [\nu_{j,i}^{k}]_{1\le i,j\le \mult}^{T} = [\mu_{\bbeta_{i},\bbeta_{j}+\be_{k}}]_{1\le i,j\le \mult}
$$
using the notation \eqref{eq:nf}
and the convention that
$\nu_{i,j}^{k}=\mu_{\bbeta_{i},\bbeta_{j}+\be_{k}}=0$ if
$|\bbeta_{i}|\ge |\bbeta_{j}|$.
Consequently,
$$\nu_{i,j}^{k}=\mu_{\beta_{i}, \beta_{j}+e_{k}} \quad i,j, =1, \ldots, \mult, k=1, \ldots, n,
$$ and we have
$$
\Lambda_{i} = \sum_{|\bbeta_{j}|<|\bbeta_{i}|}\sum_{k=1}^{n} \mu_{\bbeta_{i},\bbeta_{j}+\be_{k}}\tint{k} \Lambda_{j}
$$
where $\mu_{\bbeta_{i},\bbeta_{j} + \be_{k}}$ is the coefficient of
$\bx_{\xi}^{\bbeta_{i}}$ in the normal form of $\bx_{\xi}^{\bbeta_{j}+  \be_{k}}$ in the basis $B$ of $\Rg/Q$.

Next we give a result that allows to simplify the linear systems
involved in the integration method. We first need a definition:
\begin{definition}
Let $E\subset \NN^n$ be a set of exponents. We say that $E$ is {\em closed under division} if $\beta=(\beta_{1}, \ldots, \beta_n) \in E$ implies that $\beta-\e_k\in E$ as long as $\beta_{k}>0$ for all $k=1, \ldots, n$. We also call the corresponding primal basis $B=\{\bx_\xi^{\bbeta_{1}},\ldots,
  \bx_\xi^{\bbeta_{r}} \}$ closed under division.
\end{definition}

The following lemma
provides a simple
characterization of dual bases of inverse systems closed under derivation, that we will use in the integration algorithm.

\begin{lemma}\label{lem:primaldual} Let $B=\{\bx_\xi^{\bbeta_{1}},\ldots,
  \bx_\xi^{\bbeta_{r}} \}\subset \CC[\bx]$ be  closed under
  division and ordered by degree. Let $\bLambda =\{\Lambda_{1},\ldots, \Lambda_{r}\}\subset  \CC[\bd_\xi]$ be a linearly independent set such that
\begin{eqnarray}\label{eq:newLambda}
\Lambda_{i} = \sum_{|\bbeta_{j}|<|\bbeta_{i}|}\sum_{k=1}^{n} \mu_{\bbeta_{i},\bbeta_{j}+\be_{k}}\tint{k} \Lambda_{j}.
\end{eqnarray}
Then $\D=\sp\{\Lambda_{1},\ldots, \Lambda_{r}\}$ is closed under derivation iff  for all $i,s=1, \ldots, r$, $ |\beta_s|<|\beta_i|$ and $k\neq l\in \{1, \ldots, n\}$ we have
\begin{eqnarray}\label{eq:commnew}
\sum_{j: |\beta_s|<|\beta_j|<|\beta_i|}\mu_{\bbeta_i, \bbeta_j+\e_k}\mu_{\beta_j, \beta_s+\e_l}-\mu_{\bbeta_i, \bbeta_j+\e_l}\mu_{\beta_j, \beta_s+\e_k}=0 .
\end{eqnarray}
Furthermore, $(B,\bLambda)$ is a graded primal-dual basis pair iff they satisfy \eqref{eq:commnew} and
\begin{eqnarray}\label{eq:orth}
\mu_{\bbeta_i, \bbeta_j+\e_k}=\begin{cases} 1 & \text{ for } \bbeta_i= \bbeta_j+\e_k\\
0 & \text{ for } \bbeta_j+\e_k \in E, \; \bbeta_i \neq \bbeta_j+\e_k ,
\end{cases}
\end{eqnarray}
\end{lemma}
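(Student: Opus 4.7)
The plan is to prove the two equivalences separately, both by induction on the degree level $|\beta_i|$, building on Proposition~\ref{prop:integral}. Order the basis so that $|\beta_{i_1}|\le |\beta_{i_2}|$ when $i_1\le i_2$, and let $\DDD_t := \sp\{\Lambda_j: |\beta_j|\le t\}$.

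For the first equivalence, the key observation is that formula \eqref{eq:newLambda} is precisely the shape \eqref{eq:closed} from Proposition~\ref{prop:integral}, with $\nu^k_j = \mu_{\bbeta_i,\bbeta_j+\e_k}$ and the sum ranging over $j$ with $|\beta_j|<|\beta_i|$. I proceed by induction on $t=|\beta_i|$. In the base case $|\beta_i|=0$ there is nothing to check. For the inductive step, assume $\DDD_{t-1}$ is closed under derivation and that \eqref{eq:deriv} holds inductively in the form $\partial_{\dv_l}(\Lambda_j)=\sum_{|\beta_s|<|\beta_j|}\mu_{\bbeta_j,\bbeta_s+\e_l}\Lambda_s$. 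Proposition~\ref{prop:integral}, applied with $\Delta=\Lambda_i$, then says that $\partial_{\dv_k}(\Lambda_i)\in \DDD_{t-1}$ for all $k$ if and only if the commutation relation \eqref{eq:commute} of Proposition~\ref{prop:integral} holds. Substituting the inductive expressions for $\partial_{\dv_l}(\Lambda_j)$ and $\partial_{\dv_k}(\Lambda_j)$ into that commutation relation, then collecting coefficients of the linearly independent $\Lambda_s$, reduces \eqref{eq:commute} to exactly \eqref{eq:commnew}. This yields the first iff.

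For the second equivalence, I write each $\Lambda_i = \sum_\alpha a_{i,\alpha}\,\tfrac{1}{\alpha!}\bdv_\xi^\alpha$. The primal–dual condition amounts to $a_{i,\beta_s}=\delta_{is}$ for all $\bbeta_s\in E$. The crucial computation is a recursion for $a_{i,\alpha}$. From the definition \eqref{eq:shortint}, the coefficient of $\tfrac{1}{\alpha!}\bdv_\xi^\alpha$ in $\tint{k}\Lambda_j$ equals $a_{j,\alpha-\e_k}$ when $\alpha_{k+1}=\cdots=\alpha_n=0$ and $\alpha_k\ge 1$, and is zero otherwise. Hence for $\alpha\ne 0$, writing $k(\alpha):=\max\{k:\alpha_k>0\}$, formula \eqref{eq:newLambda} gives
\[
a_{i,\alpha} \;=\; \sum_{|\beta_j|<|\beta_i|} \mu_{\bbeta_i,\bbeta_j+\e_{k(\alpha)}}\, a_{j,\alpha-\e_{k(\alpha)}},
\]
while $a_{i,0}=0$ for $i\ne 1$ since every $\Lambda_i$ (for $i\ge 2$) is built from integrals.

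Now I induct on $|\beta_i|$, using that $E$ is closed under division: if $\bbeta_s\in E$ and $\bbeta_s\ne 0$, then $\bbeta_s-\e_{k(\beta_s)}\in E$, say equal to $\bbeta_t$, and $|\beta_t|<|\beta_i|$ whenever $|\beta_s|\le|\beta_i|$. Assuming by induction that $a_{j,\bbeta_t}=\delta_{jt}$ for $|\beta_j|<|\beta_i|$, the recursion collapses to $a_{i,\bbeta_s}=\mu_{\bbeta_i,\bbeta_t+\e_{k(\beta_s)}}=\mu_{\bbeta_i,\bbeta_s}$. Thus $a_{i,\bbeta_s}=\delta_{is}$ for all $\bbeta_s\in E$ is equivalent, at each level, to $\mu_{\bbeta_i,\bbeta_s}=\delta_{is}$ whenever $\bbeta_s\in E$, which is precisely \eqref{eq:orth}. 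Since being a primal–dual pair also forces $\DDD$ to be closed under derivation (as $\DDD$ is the dual of a quotient), condition \eqref{eq:commnew} is simultaneously necessary by the first part, giving the second iff.

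The main obstacle is the combinatorial bookkeeping for $\tint{k}$: one must verify that the ``staircase'' restriction $\alpha_{k+1}=\cdots=\alpha_n=0$ singles out a unique index $k(\alpha)$ that recovers $\alpha$, so that the coefficient recursion for $a_{i,\alpha}$ has exactly one term per $j$. Once this is in place, both inductions are driven by the identification of \eqref{eq:newLambda} with the canonical form \eqref{eq:closed} of Proposition~\ref{prop:integral}, and the division-closedness of $E$ ensures the inductive hypothesis always applies at the exponent $\bbeta_s-\e_{k(\beta_s)}$.
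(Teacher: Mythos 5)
Your proof is correct and follows essentially the same route as the paper's: for the first equivalence you reduce to Proposition~\ref{prop:integral} exactly as the authors do, expanding \eqref{eq:commute} via \eqref{eq:deriv}, collecting coefficients of the linearly independent $\Lambda_s$, and running an induction on the order $t$ for the converse; for the second equivalence you use the same ``last non-zero index'' $k(\alpha)$ trick for $\tint{k}$ together with closure of $E$ under division, phrased in terms of the coefficients $a_{i,\alpha}$ rather than the pairing $\Lambda_i(\bx_\xi^{\beta_l})$, which is the identical computation. The one point you leave implicit that the paper spells out is the degree bound $\deg(\Lambda_i)\le|\beta_i|$ needed to know $a_{i,\beta_l}=0$ for $|\beta_l|>|\beta_i|$ in the converse of the primal--dual equivalence; this does follow directly from your recursion for $a_{i,\alpha}$ by the same induction (the summands $a_{j,\alpha-\e_{k(\alpha)}}$ vanish when $|\alpha|-1>|\beta_j|$ for every $j$ with $|\beta_j|<|\beta_i|$), so it is a presentation gap rather than a mathematical one, but worth stating.
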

\begin{proof}[\bf Proof of Lemma \ref{lem:primaldual}]
Assume $\bLambda=\{\Lambda_{1},\ldots, \Lambda_{r}\}$ is linearly
independent and $\D={\rm span}(\Lambda)$ is closed under derivation.
For $t\in \{0, \ldots, \order\}$ denote by $\{\Lambda_1, \ldots,
\Lambda_{r_t}\}=\bLambda \cap  \CC[\bd_\xi]_{t}$ and $\D_{t}={\rm
  span}(\Lambda_1, \ldots, \Lambda_{r_t})$. Then by Proposition
\ref{prop:integral}, $\bLambda$ satisfy equations \eqref{eq:deriv} for
$t=0,  \ldots, \order$ and for $j=1, \ldots, r$, $k=1, \ldots, n$, we have $\partial_{\dv_{k}}(\Lambda_j)=\sum_{|\beta_s|<|\beta_j|} \mu_{\beta_j, \beta_s+\e_k}\Lambda_s$. Substituting this to \eqref{eq:commute} we get for $i=1, \ldots, r$
\begin{eqnarray}\label{eq:intermediate}
\sum_{|\beta_j|<|\beta_i|}\mu_{\bbeta_i, \bbeta_j+\e_k}\sum_{|\beta_s|<|\beta_j|}\mu_{\beta_j, \beta_s+\e_l}\Lambda_s \nonumber\\
\quad -\mu_{\bbeta_i, \bbeta_j+\e_l}\sum_{|\beta_s|<|\beta_j|}\mu_{\beta_j, \beta_s+\e_k}\Lambda_s=0. \;
\end{eqnarray}
Then using linear independence and  collecting the coefficients of $\Lambda_s$ we get \eqref{eq:commnew}.\\
Conversely, assume that \eqref{eq:commnew} is satisfied. Then \eqref{eq:intermediate} is also satisfied. We use induction on $t$ to prove that $\D_t$ is closed under derivation. For $t=0$ there is nothing to prove. Assume  $\D_{t-1}$ is closed under derivation. Then by Proposition \ref{prop:integral} if $|\beta_j|<t$ then $\partial_{\dv_{k}}(\Lambda_j)=\sum_{|\beta_s|<|\beta_j|} \mu_{\beta_j, \beta_s+\e_k}\Lambda_s$ for $ k=1, \ldots, n$. Thus  for $|\beta_i|=t$, \eqref{eq:intermediate} implies  that
$$
\sum_{|\beta_j|<|\beta_i|}\mu_{\bbeta_i, \bbeta_j+\e_k}\partial_{\dv_{l}}(\Lambda_j)-\mu_{\bbeta_i, \bbeta_j+\e_l}\partial_{\dv_{k}}(\Lambda_j)=0.
$$
Again, by Proposition \ref{prop:integral} we get that $\D_t$ is closed under derivation.\\
Next, assume first that $(B,\Lambda)$ is a graded primal-dual basis
pair. This means that for $ i=1,\ldots, \mult$ and for $l$ such that $|\bbeta_l|\leq |\bbeta_i|$
\begin{eqnarray*}
\delta_{i,l}= \Lambda_i\left(\bx_\xi^{\bbeta_l}\right) &=& \sum_{k=1}^n \sum_{|\bbeta_{j}|<|\bbeta_{i}|} \mu_{\bbeta_i, \bbeta_j+\e_k}\tint{k}{\Lambda_j} \left(\bx_\xi^{\bbeta_l}\right)\\
&=& \sum_{k=1}^n \sum_{|\bbeta_{j}|<|\bbeta_{i}|} \mu_{\bbeta_i, \bbeta_j+\e_k}
    {\rm  coeff}({\bdv^{\bbeta_l}\over \bbeta_{l}!}, \;\tint{k}{\Lambda_j})
\end{eqnarray*}
Fix  $k$ to be the index of the last non-zero entry of $\bbeta_l$. For
all other $k$'s $\bdv^{\bbeta_l}$ becomes zero when we
substitute $0$ into $\dv_{k+1},  \ldots, \dv_{n}$ in
$\tint{k}{\Lambda_j}$. Thus,
\begin{eqnarray*}
\Lambda_i\left(\bx_\xi^{\bbeta_l}\right)
&=& \sum_{|\bbeta_{j}|<|\bbeta_{i}|} \mu_{\bbeta_i, \bbeta_j+\e_{k}}
{\rm  coeff}({\bdv^{\bbeta_l}\over \bbeta_{l}!}, \;\tint{k}{\Lambda_j})\\
&=& \sum_{|\bbeta_{j}|<|\bbeta_{i}|} \mu_{\bbeta_i, \bbeta_j+\e_{k}}
    {\rm  coeff}({\bdv^{\bbeta_l-\be_{k}}\over (\bbeta_{l}-\be_{k})!}, \Lambda_j).
\end{eqnarray*}
Since $E$ is closed under division, $\bbeta_l-\e_{k}=\bbeta_m \in E$ for
some $m<l$. By duality,
we have that $ {\rm  coeff}({\bdv^{\bbeta_m}\over (\bbeta_m)!}, \Lambda_j)=\delta_{m,j}$, so
$$
 \Lambda_i\left(\bx_\xi^{\bbeta_l}\right)=\mu_{\beta_i, \beta_m+e_{k}}=\mu_{\beta_i, \beta_l}.
 $$
To satisfy $ \Lambda_i\left(\bx_\xi^{\bbeta_l}\right)=\delta_{i,l}$ we must have
$$\mu_{\bbeta_i, \bbeta_m+\e_k}=\begin{cases} 1 &\text{ if } \bbeta_i=\bbeta_m+\e_k\\
0 & \text{ if }\bbeta_m+\e_k=\bbeta_l\in E\text{ but } i\neq l.
\end{cases}
$$
Conversely, by induction on $t=|\bbeta_{i}|$ we have that
$\deg(\Lambda_{i})\le |\bbeta_{i}|$. Then
$\Lambda_i\left(\bx_{\xi}^{\bbeta_l}\right)=0$ when
$|\bbeta_{l}|>|\bbeta_{i}|$.
For $|\bbeta_{l}|\le |\bbeta_{i}|$, Equations \eqref{eq:orth} imply
that the coefficient of $\bd^{\bbeta_{l}}\over \bbeta_{l}!$ in $\Lambda_{i}$ is $0$ if
$i\neq l$ and $1$ if $i=l$. Therefore  $(B,\Lambda)$ is a graded
primal-dual basis pair.
\end{proof}

To compute the inverse system $\D$ of $\fb$ at a point $\xi$, we will consider the additional systems of equations in
$\xi$ and $\mu=\{\mu_{\beta_{i},\alpha}\}$:
\begin{equation}\label{eq:fperp}
\Lambda_{i}(f_{j})=0\text{ for } 1\le i\le r, 1\le j\le N.
\end{equation}

Throughout the paper we use the following notation:
\begin{notation}\label{not:HtJt}
Let $f_1, \ldots, f_N\in \CC[\bx]$, $\xi\in\CC^{n}$ and fix $t\in \NN$. Let $B_{t-1}=\{\bx_\xi^{\beta_1}, \ldots,$ $\bx_\xi^{\beta_{r_{t-1}}}\}\subset \CC[\bx_\xi]_{t-1}$ be closed under division and  $\bLambda_{t-1}=\{\Lambda_1, \ldots, \Lambda_{r_{t-1}}\}\subset \CC[\bdv_\xi]_{t-1}$ dual to $B_{t-1}$ with $$\partial_{\dv_{k}}(\Lambda_j)=\sum_{|\beta_s|<|\beta_j|} \mu_{\beta_j, \beta_s+\e_k}\Lambda_s\quad j=1,\ldots, r_{t-1}, k=1, \ldots, n.$$ Consider the following homogeneous linear system of equations in the variables $\{\nu_j^k\;:\;j=1, \ldots, r_{t-1}, \; k=1, \ldots, n\}$:
{\small \begin{eqnarray}
\sum_{j: |\beta_s|<|\beta_j|<t}\nu_{j}^{k}\,\mu_{\beta_j,
  \beta_s+\e_l}-\nu_{j}^{l}\, \mu_{\beta_j, \beta_s+\e_k}=0,\quad 1\leq k<l\leq n \label{eq:h1}\label{eq:(1)}\\
\nu_{j}^{k}=0 \quad \textup{ if } \beta_{j}+\e_{k}=\beta_{l} \textup{ for some }1 \le l\le r_{t-1}
   \label{eq:h2}\label{eq:(2)}\quad\quad\quad\quad\quad\quad\quad\\
\left( \sum_{j=1}^{r_{t-1}} \sum_{k=1}^n \nu_j^k\tint{k}{\Lambda_j}\right)\left(f_l\right)=0\quad l=1, \ldots, N.\label{eq:h3}\label{eq:(3)} \quad\quad\quad\quad\quad\quad\quad\quad\quad
\end{eqnarray}}
We will denote by $H_t$ the coefficient matrix of the equations in \eqref{eq:(1)} and \eqref{eq:(2)} and by $K_t$ the coefficient matrix of the equations in \eqref{eq:(1)}-\eqref{eq:(3)}.
\end{notation}

By Proposition \ref{prop:integral} and  Lemma \ref{lem:primaldual}, if $K_{t}\, \nu = 0$ where $\nu=[\nu_{j}^{k}:j=1, \ldots, s, \; k=1, \ldots, n]$, then $\Lambda = \sum_{j=1}^s \sum_{k=1}^n \nu_j^k\tint{k}{\Lambda_j}  \in (\fb)^{\perp} \cap \CC[\bdv_\xi]_{t}=\D_{t}$.
The main loop of the integration method described in Algorithm \ref{AlINT} consists of computing the new basis elements in $\D_{t}$ and the new basis monomials in $B_{t}$ of degree $t$ from the primal-dual basis pair $(B_{t-1}, \bLambda_{t-1})$ in degree $t-1$.

Algorithm~\ref{AlINT} produces incrementally a basis of $\D$, similarly
to Macaulay's method.  The algorithmic advantage is the
smaller matrix size in $O(r\,n^2 +N)$ instead of $N\, {n+\order-1 \choose \order}$,
where $\order$ is the maximal degree (depth) in the dual,
cf.~\cite{mm11,Hauensteindeflationmultiplicitystructure2016}.

\begin{algorithm}[ht]
\caption[AlINT]{\sc Integration Method - Iteration $t$ }
\label{AlINT}
\begin{flushleft}
\noindent{\bf Input:} $t> 0$,  $\fb=(f_1, \ldots , f_N)\in\Rg^N$,
$\xi\in \CC^n$, $B_{t-1}=\{\bx_{\xi}^{\bbeta_1}, \ldots,
\bx_{\xi}^{\bbeta_{r_{t-1}}}\}\subset \CC[\bx]$ closed under division and
$\bLambda_{t-1}=\{\Lambda_1, \ldots, \Lambda_{r_{t-1}}\}\subset
\CC[\bdv_\xi] $ a basis for $\DDD_{t-1}$ dual to $B_{t-1}$, of the form \eqref{eq:newLambda}. \\
{\bf Output:} Either ``$\DDD_t=\DDD_{t-1}$" or $B_t=\{\bx_{\xi}^{\bbeta_1}, \ldots, \bx_{\xi}^{\bbeta_{r_{t}}}\}$ for some $r_{t}>r_{t-1}$ closed under division and
$\bLambda_{t}=\{\Lambda_1, \ldots, \Lambda_{r_{t}}\}$ with $\Lambda_{i}$ of the form \eqref{eq:newLambda},
satisfying \eqref{eq:commnew}, \eqref{eq:orth} and \eqref{eq:fperp}.\\
\hrule\vspace{2mm}
\textbf{(1)} Set up the coefficient matrix $K_t$ of the homogeneous linear system \eqref{eq:h1}-\eqref{eq:h3}
in Notation \ref{not:HtJt}  in the variables $\{\nu_j^k\}_{j=1, \ldots, r_{t-1}, \; k=1, \ldots, n}$ associated to an element of the form $\Lambda= \sum_{j=1}^{r_{t-1}} \sum_{k=1}^n \nu_j^k\tint{k}{\Lambda_j}$. Let $h_t:=\dim\ker K_t$.
\\ \textbf{(2)} If $h_t=0$ then return ``$\DDD_t=\DDD_{t-1}$". If $h_t>0$ define $r_{t}:=r_{t-1}+h_t$.
Perform a triangulation of $H_{t}$ by row reductions with row permutations and column pivoting so that the non-pivoting columns correspond to exponents $\bbeta_{r_{t-1}+1},
 \ldots, \bbeta_{r_{t}}$ with strict divisors in $B_{t-1}$. Let
 $B_{t}= B_{t-1}\cup \{\xb_{\xi}^{\beta_{r_{t-1}+{1}}}, \ldots, \xb_{\xi}^{\beta_{r_{t}}}\}$.
\\ \textbf{(3)} Compute a basis $\Lambda_{r_{t-1}+1}, \ldots, \Lambda_{r_{t}}\in \DRgxi$ of
$\ker K_t$ from the triangular reduction of $H_{t}$ by setting the
coefficients of the non-pivoting columns to $0$ or $1$.
This yields a basis $\bLambda_{t}= \bLambda_{t-1}\cup \{
\Lambda_{r_{t-1}+1}, \ldots, \Lambda_{r_{t}}\}$ dual to $B_{t}$. The coefficients
  $\nu_{i,j}^{k}$ of $\Lambda_{i}$ are 
  $\mu_{\beta_{i},\beta_{j}+\e_{k}}$ in \eqref{eq:newLambda} so that Eq.~\eqref{eq:fperp} are satisfied.
  Eq.~\eqref{eq:orth} are satisfied, since $\bLambda_{t}$ is dual
  to $B_{t}$.
\end{flushleft}
\end{algorithm}

The full {\sc Integration Method}  consists of taking
$\Lambda_1:=1_\bxi$ for $t=0$, a basis of $\D_0$ and then iterating
algorithm {\sc Integration Method - Iteration $t$} until we find a
value of $t$ when $\D_t=\D_{t-1}$. This implies that the order
$\order =\order_\xi(\fb)=t-1$. This leads to the following definition.

\begin{definition}\label{def:compl}
We say that $\bLambda\subset \CC[\bdv_\xi]$ is {\em complete} for $\fb$ at $\xi$ if the linear system $K_{t}$ of the equations
\eqref{eq:h1}-\eqref{eq:h3} in degree $t= \order+1=\ord(\bLambda)+1$
is such that $\ker K_{\order+1}= \{0\}$.
\end{definition}
Notice that the full {\sc Integration Method} constructs a graded primal-dual
basis pair $(B,\bLambda)$.
The basis $\bLambda \subset (\fb)^{\perp}$ spans a space stable
by derivation and is complete for $\fb$, so that we have
${\rm span}({\bLambda})=(\fb)^{\perp}\cap\CC[\bdv_\xi]= Q^{\perp}$
where $Q$ is the primary component of $(\fb)$ at $\xi$.

To guarantee that $B_{t}$ is closed under division, one could
choose a graded monomial ordering $\prec$ of $\CC[\bdv_\xi]$ and
compute an auto-reduced basis of $\ker H_{t}$ such that the initial terms for $\prec$
are $\bd_\xi^{\beta_{i}}$. The set   $B_{t}$ constructed in
this way would be closed under division, since $\D_{t}$ is stable under
derivation. In the approach we use in practice, we
choose the column pivot taking into account the numerical values of
the coefficients and not according to a monomial ordering and we check
a posteriori that the set of exponents is closed under division (See
Example \ref{ex:7.3}).

The main property that we will use for the certification of
multiplicities is given in the next theorem.

\begin{theorem}\label{thm:isolated}
If $\xi^{*}$ is an isolated solution of the system $\fb(\xb)=0$ and $B$
is a graded primal basis at $\xi^{*}$ closed under division, then the
system $F(\xi,\mu)=0$
of all equations \eqref{eq:commnew},
  \eqref{eq:orth} and \eqref{eq:fperp}  admits $(\xi^{*},\mu^{*})$ as an isolated
simple root, where $\mu^{*}$ defines the
basis $\bLambda^{*}$ of the inverse system of $(\fb)$ at $\xi$ dual to $B$, due to \eqref{eq:newLambda}.
\end{theorem}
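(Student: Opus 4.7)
The plan is to establish three things in order: $(i)$ that $(\xi^{*},\mu^{*})$ satisfies $F=0$; $(ii)$ that it is an isolated zero of $F$; and $(iii)$ that the Jacobian of $F$ there has full column rank. For $(i)$, the graded primal-dual pair $(B,\bLambda^{*})$ of Lemma \ref{pdlemma} has $\sp(\bLambda^{*})=Q^{\perp}$, a subspace of $(\fb)^{\perp}$ closed under the derivations $\partial_{\dv_{k,\xi^{*}}}$. Lemma \ref{lem:primaldual} converts closure under derivation into equations \eqref{eq:commnew} and the graded primal-dual normalization into \eqref{eq:orth}, while membership in $(\fb)^{\perp}$ is the statement of \eqref{eq:fperp}.

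For $(ii)$, note that $\bbeta_{1}=0$ forces $\Lambda_{1}=1_{\xi}$, so the equations \eqref{eq:fperp} with $i=1$ read $f_{j}(\xi)=0$ for all $j$. The isolation of $\xi^{*}$ therefore forces $\xi=\xi^{*}$ in a small enough neighborhood of $(\xi^{*},\mu^{*})$. With $\xi=\xi^{*}$ fixed, formulas \eqref{eq:newLambda} define a $\bLambda\subset\CC[\bdv_{\xi^{*}}]$ whose leading terms, by \eqref{eq:orth}, are exactly $\bdv_{\xi^{*}}^{\bbeta_{i}}/\bbeta_{i}!$, so $\bLambda$ is linearly independent of cardinality $\mult$; equations \eqref{eq:commnew} together with Lemma \ref{lem:primaldual} make $(B,\bLambda)$ a graded primal-dual pair of a derivation-closed subspace of $\CC[\bdv_{\xi^{*}}]$. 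By \eqref{eq:fperp} and Lemma \ref{lem:primcomp} this subspace lies in $Q^{\perp}$, the dimension count forces equality, and the uniqueness clause of Lemma \ref{pdlemma} then gives $\bLambda=\bLambda^{*}$, i.e.\ $\mu=\mu^{*}$.

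For $(iii)$, I would split $\mu$ into the coordinates pinned to $0$ or $1$ by \eqref{eq:orth} and the remaining free coordinates, and decompose the Jacobian of $F$ accordingly. The \eqref{eq:orth} block is the identity on the pinned coordinates and zero in $\xi$, so full column rank reduces to nonsingularity of the restricted Jacobian of \eqref{eq:commnew} and \eqref{eq:fperp} in $\xi$ and the free $\mu$ entries. Grading the free coordinates by the order $t$ at which the corresponding $\bbeta_{i}$ first appears, I would then recognize this restricted Jacobian as a block-triangular assembly of the coefficient matrices $K_{t}$ from Notation \ref{not:HtJt}. The completeness of $\bLambda^{*}$ for $\fb$ at $\xi^{*}$ (Definition \ref{def:compl}), which says $\ker K_{\order+1}=0$, together with analogous injectivities at each lower grade, would then yield full column rank.

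The main obstacle is step $(iii)$: local injectivity from step $(ii)$ does not by itself imply full Jacobian rank of a smooth map, and one must genuinely exploit the extra structure of the grading and the matrices $K_{t}$ to rule out kernel elements of the linearization. I anticipate that the cleanest route is via the rational parametrization of the regular stratum of the punctual Hilbert scheme developed later in the paper, which would exhibit $F$ as cutting out $(\xi^{*},\mu^{*})$ transversely inside its domain and thereby convert the set-theoretic isolation of step $(ii)$ into the infinitesimal simplicity needed here.
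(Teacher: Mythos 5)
Your steps $(i)$ and $(ii)$ are essentially correct: the verification that $(\xi^{*},\mu^{*})$ solves $F=0$ follows from Lemmas \ref{lem:primcomp} and \ref{lem:primaldual} as you say, and the argument that $\Lambda_{1}=1_{\xi}$ forces $\xi=\xi^{*}$ locally and then the uniqueness clause of Lemma \ref{pdlemma} pins down $\mu=\mu^{*}$ does give set-theoretic isolation. However, step $(iii)$ — simplicity, i.e.\ full column rank of the Jacobian of $F$ at $(\xi^{*},\mu^{*})$ — is the entire content of the theorem beyond what is routine, and you do not actually prove it. You sketch a block-triangular decomposition in terms of the matrices $K_{t}$ and then explicitly hedge ("I anticipate that the cleanest route is via the rational parametrization\ldots"), but the parametrization of Section \ref{sec:rational} only describes the regular stratum $\Hreg_{\hb}$ of the punctual Hilbert scheme; you still would have to show that the equations \eqref{eq:fperp}, restricted to that stratum and joined with the $\xi$-variables, are transverse, which is not automatic and is not what the completeness condition $\ker K_{\order+1}=0$ alone gives you. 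In particular, the intermediate kernels $\ker K_{t}$ for $t\le\order$ are nonzero by construction (they produce the new dual elements), and your proposed "block-triangular assembly" does not address how those free directions are absorbed. As you yourself note, injectivity from $(ii)$ does not yield infinitesimal injectivity, so the heart of the theorem remains unproved.

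The paper does not attempt a direct proof at all. Its proof of Theorem \ref{thm:isolated} consists of showing that the system \eqref{eq:commnew}, \eqref{eq:orth}, \eqref{eq:fperp} is \emph{equivalent} to the deflated system (14) of Theorem~4.11 in \cite{Hauensteindeflationmultiplicitystructure2016}: the equations \eqref{eq:commnew} encode commutation of the parametric multiplication operators in the basis $B$, \eqref{eq:orth} (via Lemma \ref{lem:primaldual}) encodes the graded primal-dual normalization, and \eqref{eq:fperp} is the same as $\N(f_{i})=0$ for the parametric normal form $\N$ of that reference. The conclusion — that the root is simple and isolated — is then imported wholesale from the cited theorem. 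If you want a self-contained argument as in your proposal, you would need to reprove the Jacobian nondegeneracy of \cite{Hauensteindeflationmultiplicitystructure2016}, which is a substantive piece of work, not a corollary of the set-theoretic isolation you established.
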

\begin{proof}
  This is a direct consequence of
  \cite[Theorem 4.11]{Hauensteindeflationmultiplicitystructure2016}, since the system of
  equations \eqref{eq:commnew}-\eqref{eq:fperp}  is
  equivalent to the system (14) in
  \cite[Theorem 4.11]{Hauensteindeflationmultiplicitystructure2016}. The equations
  \eqref{eq:commnew} express the commutation of the transposed of the
  parametric operator of multiplication in $B$, which are the same as
  the equations of commutation of the operators.  By Lemma
  \ref{lem:primaldual}, the equations \eqref{eq:orth} are equivalent to
  the fact that $(B,\bLambda^{*})$ is a graded primal-dual basis
  pair.  Finally, the equations \eqref{eq:fperp} are the same as
  $\N(f_{i})=0$, $i=1,\ldots, s$ where $\N$ is the parametric normal
  form defined in \cite{Hauensteindeflationmultiplicitystructure2016}[see Definition 4.7
  and following remark]. Therefore the two systems are equivalent. By
  \cite[Theorem 4.11]{Hauensteindeflationmultiplicitystructure2016}, they define the
  simple isolated solution $(\xi^{*},\mu^{*})$, where $\mu^{*}$
  defines the basis $\bLambda^{*}$ dual to $B$ due to
  \eqref{eq:newLambda}.
\end{proof}

\section{Punctual Hilbert scheme}\label{sec:Hilb}

The results in Sections \ref{sec:Hilb} and \ref{sec:rational} do not
depend on the point $\xi\in \CC^n$, so to simplify the notation, we
assume in these sections that $\xi={\bf 0}$. Let
$\m=(x_{1},\ldots,x_{n})$ be the maximal ideal defining $\xi={\bf
  0}\in \CC^{n}$.
Let $\DRg$ be the space of polynomials in the variables
$\bdv=(\dv_{1},\ldots,\dv_{n})$ and $\DRg_{t}\subset \DRg$ the subspace of
polynomials in $\d$ of degree $\le t$.

For a vector space $V$, let $\Gr_{\mult}(V)$ be the projective variety
of the $\mult$ dimensional linear subspaces of $V$,
also known as the {\em Grassmannian} of $\mult$-spaces of
$V$.
The points in $\Gr_{\mult}(V)$ are the projective points of
$\PP(\wedge^{\mult}V)$ of the form $\bv = v_{1} \wedge \cdots \wedge
v_{\mult}$ for $v_{i}\in V$.
Fixing a basis $\e_{1}, \ldots, \e_{s}$ of $V$, the Pl\"ucker coordinates
of $\bv$ are the coefficients of $\Delta_{i_{1}, \ldots, i_{\mult}}(\bv)$
of $\bv = \sum_{i_{1}<\cdots<i_{\mult}} \Delta_{i_{1}, \ldots,
  i_{\mult}}(\bv)\, \e_{i_{1}}\wedge \cdots \wedge \e_{i_{\mult}}$.
When $V=\DRg_{\mult-1}$, a natural basis is the dual monomial basis
$({\bdv^{\alpha}\over \alpha !})_{|\alpha|<\mult}$. The Pl\"ucker coordinates
of an element $\bv \in \Gr_{\mult}(\DRg_{\mult-1})$ for this basis are denoted
$\Delta_{\alpha_{1}, \ldots, \alpha_{\mult}}(\bv)$
 where  $\alpha_i\in \NN^n$, $|\alpha_{i}|<\mult$.

If $\bLambda=\{\Lambda_{1}, \ldots, \Lambda_{\mult}\}$ is a basis of a
$\mult$-dimensional space $\D$
in $\DRg_{\mult-1}$ with
$\Lambda_{{i}} = \sum_{|\alpha|<  \mult} \mu_{{i},\alpha}
{\bd^{\alpha}\over \alpha !}$
, the Pl\"ucker coordinates of
$\D$ are, up to a scalar, of the form
$
\Delta_{\alpha_{1}, \ldots,\alpha_{\mult}}= \det \left[
\mu_{i,\alpha_{j}}\right]_{1\le i,j\le \mult}.$
In particular, a monomial set $B=\{\bx^{\beta_{1}},
\ldots, \bx^{\beta_{\mult}}\}\subset \Rg_{\mult-1}$ has a dual basis
in $\D$ iff $\Delta_{\beta_{1}, \ldots, \beta_{\mult}}(\D)\neq 0$.
If $(B=\{\bx^{\beta_{i}}\}_{i=1}^{\mult}, \bLambda=\{\Lambda_{i}\}_{i=1}^\mult)$ is a graded
primal-dual basis pair, then $\mu_{{i},\beta_{j}}= \delta_{i,j}$. To keep our notation consistent with the previous sections, the coordinates of $\Lambda_{i}\in \bLambda$ when $\bLambda$ is dual to $B$ will be denoted by $\mu_{\beta_i, \alpha}$ instead of $\mu_{i,\alpha}$.
By properties of the determinant, the Pl\"ucker coordinates of $\D$ are
such that
\begin{equation}\label{eq:plucker}
\mu_{\beta_{i}, \alpha} = {\Delta_{\beta_{1}, \ldots, \beta_{i-1},
    \alpha, \beta_{i+1}, \ldots, \beta_{\mult}} \over
  \Delta_{\beta_{1}, \ldots, \beta_{\mult}} }\quad i=1, \ldots, \mult.
\end{equation}
If $\D$ is the dual of an ideal $Q= \D^{\perp}\subset \Rg$ and $B=\{\bx^{\beta_{1}},
\ldots, \bx^{\beta_{\mult}}\}$ is a basis of $\Rg/Q$ so that $
\Delta_{\beta_{1}, \ldots, \beta_{\mult}}(\D)\neq 0$, the normal form of $\bx^{\alpha}\in \Rg_{\mult-1}$ modulo
$Q=\D^{\perp}$ in the basis $B$ is
$$
N(\bx^{\alpha}) = \sum_{j=1}^{\mult} \mu_{\beta_j,\alpha}\,
\bx^{\beta_{j}}
= \sum_{j=1}^{\mult} {\Delta_{\beta_{1}, \ldots, \beta_{j-1},
    \alpha, \beta_{j+1}, \ldots, \beta_{\mult}} \over
  \Delta_{\beta_{1}, \ldots, \beta_{\mult}} } \bx^{\beta_{j}}.
$$
(if $\deg(\bx^{\alpha})\ge \mult$, then $N(\bx^{\alpha})=0$).

\begin{definition}
Let $\Hilb_{\mult}\subset \Gr_{\mult}(\DRg_{\mult-1})$ be the set of linear
spaces $\D$ of dimension $\mult$ in $\DRg_{\mult-1}$ which are stable  by the derivations $\partial_{d_{i}}$
with respect to the
variables $\bdv$ (i.e. $\partial_{\dv_{i}}\D \subset \D$ for $i=1,\ldots,n$).
We called $\Hilb_{\mult}$ the {\em punctual Hilbert scheme} of points of multiplicity $\mult$.
\end{definition}

If $\D\subset \DRg$ is stable by the derivations $\partial_{d_{i}}$, then by duality $I=
\D^{\perp} \subset \Rg$ is a vector space of $\Rg$ stable by
multiplication by $x_{i}$, i.e. an ideal of $\Rg$.

\begin{proposition}\label{prop:hilb}
$\D\in \Hilb_{\mult}$ iff $\D^{\perp}=Q$ is an $\m$-primary
ideal such that $\dim \Rg/Q = \mult$.
\end{proposition}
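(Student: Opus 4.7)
The plan is to use two ingredients that are already in place: first, the remark immediately preceding the proposition that derivation-stability of a subspace $\D\subset \DRg$ is dual to ideal-stability of $\D^{\perp}\subset \Rg$ under multiplication by the $x_{i}$; and second, the fact that the pairing \eqref{partial} induces a perfect duality between $\DRg_{\mult-1}$ and $\Rg/\m^{\mult}$, because on the monomial bases the pairing matrix is diagonal with entries $\beta!\neq 0$. In particular, $(\m^{\mult})^{\perp}=\DRg_{\mult-1}$ as subspaces of $\CC[[\bdv]]$.

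For the forward direction, I assume $\D\in \Hilb_{\mult}$, so $\D\subset \DRg_{\mult-1}$, $\dim \D=\mult$, and $\D$ is stable under the $\partial_{\dv_{i}}$. The remark just cited gives that $Q:=\D^{\perp}$ is an ideal. Since any monomial $\bx^{\alpha}$ with $|\alpha|\ge \mult$ pairs to $0$ with every element of $\DRg_{\mult-1}\supset \D$, I obtain $\m^{\mult}\subset Q$, hence $\sqrt{Q}=\m$. Passing to the induced perfect pairing $\DRg_{\mult-1}\times (\Rg/\m^{\mult})\to \CC$, the annihilator of $\D$ on the $\Rg/\m^{\mult}$ side is exactly $Q/\m^{\mult}$, so $\dim(\Rg/Q)=\dim(\Rg/\m^{\mult})-\dim(Q/\m^{\mult})=\dim\D=\mult$.

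For the reverse direction, I assume $Q$ is $\m$-primary with $\dim(\Rg/Q)=\mult$. The key technical step is $\m^{\mult}\subset Q$: the local Artinian ring $\Rg/Q$ has length $\mult$, so by Nakayama's lemma the descending chain $\Rg/Q\supsetneq \m/Q\supsetneq (\m/Q)^{2}\supsetneq \cdots$ strictly decreases until it reaches $0$, which must happen within $\mult$ steps, giving $(\m/Q)^{\mult}=0$, i.e.\ $\m^{\mult}\subset Q$. Setting $\D:=Q^{\perp}$, this inclusion forces $\D\subset (\m^{\mult})^{\perp}=\DRg_{\mult-1}$, and the same duality argument as in the forward direction yields $\dim\D=\dim(\Rg/Q)=\mult$. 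Stability of $\D$ under derivations follows directly from the ideal property of $Q$: for any $\Lambda\in\D$, $p\in Q$ and $i\in\{1,\ldots,n\}$, $(\partial_{\dv_{i}}\Lambda)(p)=(x_{i}\cdot\Lambda)(p)=\Lambda(x_{i}p)=0$, since $x_{i}p\in Q$. Hence $\D\in \Hilb_{\mult}$.

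The only nontrivial step I anticipate is the inclusion $\m^{\mult}\subset Q$ in the reverse direction; once this is in place, everything else reduces to routine bookkeeping with the perfect pairing and the $x_{i}\leftrightarrow \partial_{\dv_{i}}$ correspondence already flagged in Section~2.
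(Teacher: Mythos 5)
Your proof is correct. The forward direction is essentially the same as the paper's: stability under derivation makes $Q=\D^{\perp}$ an ideal, $\D\subset\DRg_{\mult-1}$ forces $\m^{\mult}\subset Q$ (the paper instead uses $\m^{\order+1}\subset Q$ with $\order=\ord(\D)$), and $\dim(\Rg/Q)=\mult$ is read off from the perfect pairing $\DRg_{\mult-1}\times(\Rg/\m^{\mult})\to\CC$. The one tiny organizational quibble is that you write ``hence $\sqrt{Q}=\m$'' before having checked that $Q$ is proper (equivalently $Q\subset\m$, which the paper extracts from $1\in\D$), but since you compute $\dim(\Rg/Q)=\mult>0$ a line later this is harmless.

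Where you genuinely depart from the paper is the converse. The paper dispatches it in one line by invoking Lemma~2.1 (which is itself stated without proof) to conclude $Q^{\perp}\subset\CC[\bdv]$ and then asserts stability and the dimension count. You instead make the argument self-contained: the Nakayama chain $\Rg/Q\supsetneq\m/Q\supsetneq\cdots$ in the Artinian local ring $\Rg/Q$ gives $\m^{\mult}\subset Q$ outright, hence $\D=Q^{\perp}\subset(\m^{\mult})^{\perp}=\DRg_{\mult-1}$; the perfect pairing then gives $\dim\D=\mult$, and the identity $(\partial_{\dv_i}\Lambda)(p)=\Lambda(x_ip)$ gives derivation-stability directly from the ideal property of $Q$. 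This buys you a proof that does not depend on an unproved auxiliary lemma and makes the crucial degree bound $\D\subset\DRg_{\mult-1}$ (needed for $\D$ to lie in $\Gr_{\mult}(\DRg_{\mult-1})$, i.e.\ in the ambient space of $\Hilb_{\mult}$) completely explicit, which the paper's terse citation leaves partially implicit. The trade-off is length; the content is the same.
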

\begin{proof}
Let $\D\in \Hilb_{\mult}$. We prove that $\D^{\perp}=Q$ is an
$\m$-primary ideal. As $\D$ is stable by derivation, $Q=\D^{\perp}$ is
an ideal of $\Rg$. This also implies that $1\in \D$, so that $Q\subset \m$.
As $\dim \D = \dim \Rg/Q = \mult$, $\order=\ord(\D)$ is finite and $\m^{\order+1} \subset
\D^{\perp}=Q$. Therefore, $Q$ is $\m$-primary, which shows the first
implication.

Conversely, let $Q$ be a $\m$-primary
ideal such that $\dim \Rg/Q = \mult$.
Then by Lemma \ref{lem:primcomp},
$\D=Q^{\perp}\subset \DRg_{t}$ is stable by derivation and of
dimension $\mult=\dim \Rg/Q$. Thus $\D\in \Hilb_{\mult}$. This
concludes the proof of the proposition.
\end{proof}

For $\D\in \Hilb_{\mult}$, for $t\geq 0$ we denote by $\D_{t}$ the vector space of
elements of $\D$ of order $\le t$. We verify that $\D_{t}^{\perp}=
\D^{\perp}+ \m^{t+1}$. The next theorem follows from Proposition \ref{prop:integral} and Lemma \ref{lem:primaldual}.

\begin{theorem}{}\label{thm:hilbint}
For $B \subset \Rg$ closed under division such that $|B|=\mult$ and $\order=\deg(B)$, the
following points are equivalent:
\begin{enumerate}
\item $\D\in \Hilb_{\mult}$ and $B_{t}$ is a basis of $\Rg/(\D^{\perp}+ \m^{t+1})$ for $t=1,\ldots, \order$.
\item The dual basis $\bLambda=\{\Lambda_{1}, \ldots, \Lambda_{\mult}\}$ of $B$
  satisfies $\Lambda_{1}=1$ and the equations \eqref{eq:newLambda}, \eqref{eq:commnew} and
  \eqref{eq:orth}.
\end{enumerate}
\end{theorem}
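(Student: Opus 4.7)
The plan is to prove the two implications using the two enabling results: Proposition~\ref{prop:integral} (which shows any derivation-closed polynomial must be expressible via the integration formula) and Lemma~\ref{lem:primaldual} (which converts closure-under-derivation and duality into the explicit equations \eqref{eq:commnew} and \eqref{eq:orth}). Proposition~\ref{prop:hilb} will be used to translate back and forth between $\D \in \Hilb_{\mult}$ and the $\m$-primary condition.

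For the implication (1) $\Rightarrow$ (2), I would first observe that since $\D \in \Hilb_{\mult}$ is stable by derivation and $B_{t}$ is a basis of $\Rg/(\D^{\perp}+\m^{t+1})$, the orthogonal $\D_{t}=\D \cap \DRg_{t}$ has dimension $|B_{t}|$, so $\bLambda$ can be ordered so that $\ord(\Lambda_{i})\le |\bbeta_{i}|$. I would then argue by induction on $t=|\bbeta_{i}|$: at level $t=0$ we have $\Lambda_{1}=1$; assuming the $\Lambda_{j}$ for $|\bbeta_{j}|<t$ are already written in the form \eqref{eq:newLambda}, one applies Proposition~\ref{prop:integral} to each $\Lambda_{i}$ with $|\bbeta_{i}|=t$ (whose derivatives lie in $\D_{t-1}$ by derivation-stability) to obtain \eqref{eq:newLambda} with coefficients $\nu^{k}_{i,j}$ satisfying \eqref{eq:commute}; invoking the equivalence in Lemma~\ref{lem:primaldual} then converts these into \eqref{eq:commnew}. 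Finally, the orthogonality conditions \eqref{eq:orth} fall out of the duality $\Lambda_{i}(\bx^{\bbeta_{j}})=\delta_{i,j}$ combined with the already-established form \eqref{eq:newLambda}, exactly as in the last part of the proof of Lemma~\ref{lem:primaldual}.

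For (2) $\Rightarrow$ (1), the heavy lifting is already done by Lemma~\ref{lem:primaldual}: since the $\Lambda_{i}$ satisfy \eqref{eq:newLambda}, \eqref{eq:commnew}, and \eqref{eq:orth}, $(B,\bLambda)$ is a graded primal-dual basis pair and $\D=\sp(\bLambda)$ is closed under the derivations $\partial_{\dv_{k}}$. Combined with $\dim \D=|B|=\mult$ and $\D\subset \DRg_{\order}\subset \DRg_{\mult-1}$, Proposition~\ref{prop:hilb} yields $\D\in \Hilb_{\mult}$. It remains to identify $B_{t}$ as a basis of $\Rg/(\D^{\perp}+\m^{t+1})$ for $t=1,\ldots,\order$; since the pair is graded, $\ord(\Lambda_{i})\le |\bbeta_{i}|$, so the subset $\{\Lambda_{i}:|\bbeta_{i}|\le t\}$ spans $\D_{t}=\D\cap \DRg_{t}$, and duality between $\D_{t}$ and $\Rg/(\D_{t})^{\perp}=\Rg/(\D^{\perp}+\m^{t+1})$ together with the primal-dual pairing produced by \eqref{eq:orth} identifies $B_{t}$ as the required basis.

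The main obstacle is keeping the bookkeeping straight in the inductive step of (1) $\Rightarrow$ (2): one must be careful that the integration representation \eqref{eq:newLambda} produced by Proposition~\ref{prop:integral} is consistent with the duality conditions forced by $\bLambda$ being dual to $B$, i.e., that the coefficients $\nu^{k}_{i,j}$ can simultaneously be chosen to satisfy \eqref{eq:commute} and to make $\Lambda_{i}(\bx^{\bbeta_{j}})=\delta_{i,j}$. The key point is that $B$ being closed under division gives, for each monomial $\bx^{\bbeta_{l}}$ of positive degree, a well-defined predecessor $\bx^{\bbeta_{l}-\e_{k}}=\bx^{\bbeta_{m}}\in B$ (as extracted in the proof of Lemma~\ref{lem:primaldual}), so that the duality identities reduce precisely to the orthogonality equations \eqref{eq:orth}; with this observation the induction closes cleanly. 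A secondary point worth spelling out is why $|B_{t}|=\dim \D_{t}$, which uses Lemma~\ref{lem:primcomp} applied to $\D^{\perp}$ together with the truncation identity $(\D^{\perp}+\m^{t+1})^{\perp}=\D_{t}$.
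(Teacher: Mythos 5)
Your proposal is correct and takes essentially the same route as the paper: both implications are reduced to Proposition~\ref{prop:integral}, Lemma~\ref{lem:primaldual}, and Proposition~\ref{prop:hilb}, with the same inductive bookkeeping on $t$ to control the orders $\ord(\Lambda_i)\le|\bbeta_i|$ and to identify $\D_t=\D\cap\DRg_t$. Your version is, if anything, slightly more explicit in invoking Lemma~\ref{lem:primaldual} to pass from \eqref{eq:commute} to \eqref{eq:commnew} in the forward direction, whereas the paper cites Proposition~\ref{prop:integral} and leaves that conversion implicit.
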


\begin{proof}
  $(1) \Rightarrow (2)$ Assume that $\D\in \Hilb_{\mult}$ and that $B_{t}$ is a
  basis of $\Rg/(\D^{\perp}+ \m^{t+1})$. Let $\bLambda_{t}=\{\Lambda_{1},
  \ldots, \Lambda_{\mult_{t}}\}$ be a basis of $\D_{t}$ dual to $B_{t}$ with $\mult_{t}=|B_{t}|$.
  Then, for $j= \mult_{t-1}+1, \ldots, \mult_{t}$, $\Lambda_{j}\in\D_{t}$  is
  such that $$\partial_{d_{k}}(\Lambda_{j}) = \sum_{j=1}^{\mult_{t-1}}
  \nu_{i,k} \Lambda_{i}$$ for $t=1,\ldots, o$. By Proposition
  \ref{prop:integral}, Equations \eqref{eq:newLambda} and
 \eqref{eq:commnew} are satisfied. As $B_{t}$ is dual to
  $\Lambda_{1}, \ldots, \Lambda_{\mult_{t}}$, Equation
 \eqref{eq:orth} are satisfied.

  $(2) \Rightarrow (1)$ Let $\Lambda_{i}\in \DRg_{\mult-1}$ for  $i=1, \ldots,r$ be
  elements of $\DRg_{\mult-1}$ dual to $B$, which satisfies Equations
 \eqref{eq:newLambda}, \eqref{eq:commnew} and
  \eqref{eq:orth}. By induction on $t=0, \ldots,
  \order=\deg(B)$, we prove that if $\bLambda_{t}=\{\Lambda_{1}, \ldots, \Lambda_{\mult_{t}}\}$ is dual to $B_{t}$,
  then $\Lambda_{1}, \ldots, \Lambda_{\mult_{t}} \in \DRg_{t}$. The
  property is true for $t=0$ since $\Lambda_{1}=1$. If it is true for
  $t-1$, for $\Lambda_{j}$ with $j=\mult_{t-1}+1,\ldots, \mult_{t}$ we
  have by  \eqref{eq:newLambda},  \eqref{eq:commnew}
  and Proposition \ref{prop:integral}, that
 $\partial_{d_{k}}(\Lambda_{j}) = \sum_{j=1}^{\mult_{t-1}} \nu_{i,k}
 \Lambda_{i}$, $k=1,\ldots,n$. Thus $\Lambda_{j}\in \DRg_{t}$.
 This shows that $\D_{t}$ is stable by derivation where
 $\D_{t}\subset \DRg_{t}$ is the vector space spanned $\Lambda_{1}, \ldots,
 \Lambda_{\mult_{t}} \in \DRg_{t}$.
Let $\D= \D_{\order}$. Since, by   \eqref{eq:orth}, $B_{t}$ is dual to $\Lambda_{1}, \ldots, \Lambda_{\mult_{t}} \in
 \DRg_{t}$, we see that $\D \cap \DRg_{t}= \D_{t}$.
 By Proposition \ref{prop:hilb}, $Q=\D^{\perp}$ is a $\m$-primary
 ideal such that $\dim \Rg/Q= \dim \D=|B|=\mult$. Moreover, since
 $B_{t}$ is dual to the basis $\{\Lambda_{1}, \ldots, \Lambda_{\mult_{t}}\}$
 of $\D_{t}$, $B_{t}$ is a basis $\Rg/(\D^{\perp}+ \m^{t+1})$. This
 proves the reverse inclusion.
\end{proof}


For a sequence $\hb=(h_{0},h_{1}, \ldots, h_{\order})\in \NN_{+}^{\order+1}$ and $0\le t\le \order$,
let $\hb_{\vspanh{t}}=(h_{0}, \ldots, h_{t})$,  $r_{t}=
\sum_{i=0}^{t} h_{i}$. For $r\geq 1$
we denote by $\Sb^{r}$ the set of sequences $\hb$ of some length
$\delta< r$ with $h_{i}\neq 0$, $h_{0}=1$ and
$r_{\order}=r$. For $\hb \in \Sb^{r}$, we consider the following subvarieties of $\Hilb_{r_{t}}$:
$$
\Hilb_{\hb_{\vspanh{t}}}=\{\D \in \Hilb_{r_{t}}\mid \dim \D_{i} = \dim \D \cap \DRg_{i}
\le r_{i}, i= 0,\ldots, t\}.
$$
These are projective varieties in $\Hilb_{r_{t}}$ defined by rank
conditions on the linear spaces $\D \cap \DRg_{i}$ for
$\D\in \Hilb_{r_{t}}$, that can be expressed in terms of homogeneous
polynomials in the Pl\"ucker
coordinates of $\D$. In particular, the varieties
$\Hilb_{\hb}:=\Hilb_{\hb_{\vspanh{\order}}}$ are projective subvarieties of
$\Hilb_{r}$. They may not be irreducible or irreducible components of
$\Hilb_{r}$, but we have $\Hilb_{r} ={\cup_{\hb\in \Sb^{r}} \Hilb_{\hb}}$.

We will study a particular component of $\Hilb_{\hb}$, that we
call the {\em regular component of $\Hilb_{\hb}$}, denoted $\Hreg_{\hb}$.
It is characterized as follows. Let $\Hreg_{\hb_{\vspanh{0}}}= \{\vspan{1}\}=\{\DRg_{0}\}= \Gr_{1}(\DRg_{0})$
and assume that $\Hreg_{\hb_{\vspanh{t-1}}}$ has been defined as an
irreducible component of $\Hilb_{\hb_{\vspanh{t-1}}}$. Let
{\small\begin{eqnarray*}
W_{t} = \{ (\D_{t-1}, \E_{t}) \mid \D_{t-1} \in \Hilb_{\hb_{\vspanh{t-1}}},
\E_{t}\in \Gr_{r_{t}}(\DRg_{t}),\\
 \D_{t-1} \subset\E_{t} ,
\forall i\;\partial_{d_{i}} \E_{t} \subset \D_{t-1}  \}
\end{eqnarray*}}
 The constraints $\D_{t-1} \subset\E_{t}$  and $\partial_{d_{i}} \E_{t} \subset \D_{t-1}$ for $i=1, \ldots, n$ define a linear
system of equations in the Pl\"ucker coordinates of $\E_{t}$ (see
e.g. \cite{DoubiletFoundationsCombinatorialTheory1974}),
corresponding to the equations \eqref{eq:closed}, \eqref{eq:commute}.
By construction, the projection of $W_{t}\subset
\Hilb_{\hb_{\vspanh{t-1}}} \times \Gr_{r_{t}}(\DRg_{t})$ on the second factor
$\Gr_{r_{t}}(\DRg_{t})$ is $\pi_{2}(W_{t}) = \Hilb_{\hb_{\vspanh{t}}}$ and
the projection on the first factor is
$\pi_{1}(W_{t})= \Hilb_{\hb_{\vspanh{t-1}}}$.

There exists a dense subset $U_{t-1}$ of the irreducible variety
$\Hreg_{\hb_{\vspanh{t-1}}}$ (with
$\overline{U_{t-1}}=\Hreg_{\hb_{\vspanh{t-1}}}$) such that the rank of
the linear system corresponding to  \eqref{eq:closed} and \eqref{eq:commute} defining $\E_{t}$ is maximal.
Since $\pi_{1}^{-1}(\D_{t-1})$ is irreducible (in fact linear)
of fixed dimension for
$\D_{t-1} \in U_{t-1}\subset \Hreg_{\hb_{\vspanh{t-1}}}$, there is a unique
irreducible component $W_{t,reg}$ of $W_{t}$ such that
$\pi_{1}(W_{t,reg}) = \Hreg_{\hb_{\vspanh{t-1}}}$
(see eg. \cite{ShafarevichBasicalgebraicgeometry2013}[Theorem 1.26]).
We define $\Hreg_{\hb_{\vspanh{t}}} = \pi_{2}(W_{t,reg})$. It is an
irreducible component of $\Hilb_{\hb_{\vspanh{t}}}$, since otherwise
$W_{t,reg}=\pi_{2}^{-1}(\Hreg_{\hb_{\vspanh{t}}})$ would not be
a component of $W_{t}$ but strictly included in one of the
irreducible components of $W_{t}$.

\begin{definition}\label{def:hregt}
 Let $\pi_{t}: \Hilb_{\hb_{\vspanh{t}}}\rightarrow  \Hilb_{\hb_{\vspanh{t-1}}} , \; \D\mapsto
 \D\cap \DRg_{t-1} $ be the projection in
 degree $t-1$.
We define by induction on $t$,
$\Hreg_{\hb_{\vspanh{0}}}=\{\vspan{1}\}$ and $\Hreg_{\hb_{\vspanh{t}}}$
is the irreducible component
$\pi_{t}^{-1}(\Hreg_{\hb_{\vspanh{t-1}}})$ of $\Hilb_{\hb_{\vspanh{t}}}$ for $t=1,\ldots, \order$.
\end{definition}

\section{Rational parametrization}\label{sec:rational}

Let $B=\{\bx^{\beta_{1}}, \ldots, \bx^{\beta_{\mult}}\}\subset
\Rg_{\mult-1}$ be a monomial set. In this section we assume that $B$ is closed under division and its monomials are ordered by increasing degree.
 For $t\in \NN$, we
denote by $B_{t}= B\cap \Rg_{t}$, by $B_{[t]}$ the subset of its monomials
of degree $t$. Let $h_{t}=|B_{[t]}|$, $r_{t}=\sum_{0\le i\le t} h_{t}=
|B_{t}|$ and $\order=\deg(B)$.

Let $\Hilb_{B}:=\{ \D \in \Hilb_{r} \mid B_{t}$ is a basis of
$\Rg/(\D^{\perp}+ \m^{t+1}), t=0,\ldots,\order\}$.
By Theorem \ref{thm:hilbint}, $\Hilb_{B}$ is the set of linear
spaces $\D\in \Hilb_{r}$ such that $\D_{t}= \D \cap \DRg_{t}$ satisfy Equations
\eqref{eq:newLambda} and \eqref{eq:commnew}.
It is the open subset of $\D\in \Hilb_{\hb}$ such that
$\Delta_{B_{t}}(\D_{t})\neq 0$ for $t=1,\ldots, \order$, where $\Delta_{B_{t}}:=\Delta_{\beta_{1}, \ldots, \beta_{r_t}}$ denotes the Pl\"ucker coordinate for $\Gr_{r_t}(\DRg_{t})$ corresponding to the monomials in $B_t$.

Since  for  $\D\in \Hilb_{B}$ we have $\Delta_B(\D)\neq 0$, we can define the affine coordinates of $\Hilb_{B}$ using the coordinates of the elements of the basis $\bLambda=\{\Lambda_1, \ldots, \Lambda_\mult\}$ dual to $B$:
$$
\left\{\mu_{\beta_j,\alpha}= {\Delta_{\beta_{1}, \ldots, \beta_{j-1},
    \alpha, \beta_{j+1}, \ldots, \beta_{\mult}} \over
  \Delta_{B }}\; :\; j=1, \ldots , \mult, \; |\alpha|<r\right\}.
$$
The following lemma shows that  the values of the coordinates
$\{\mu_{\beta_i, \beta_j+\e_k}\; :\; i,j=1, \ldots r, |\beta_j|< |\beta_i|, k=1, \ldots, n\}$ uniquely define $\bLambda$.

\begin{lemma}\label{lem:coord}
Let $B=\{\bx^{\bbeta_1}, \ldots, \bx^{\bbeta_{r_t}}\}$ closed under division, $\D\in \Hilb_{B}$ and $\bLambda=\{\Lambda_1, \ldots, \Lambda_\mult\}$ be the unique basis of $\D$ dual to $B$ with $\Lambda_i=\sum_{|\alpha|\leq |\beta_i|} \mu_{\beta_i, \alpha}\frac{\bd^\alpha}{\alpha!}$ for $i=1, \ldots, \mult$.  Then $\Lambda_1=1$ and for $i=2, \ldots , \mult$
$$
\Lambda_i =\sum_{|\beta_j|<|\beta_i|}\sum_{k=1}^n \mu_{\beta_i, \beta_j+\e_k} \tint{k}{ \Lambda_j}.
$$
 Thus,  $\mu_{\beta_i,\alpha}$  is a polynomial function of $\{\mu_{\beta_s, \beta_j+\e_k}\;:\; |\beta_s|\leq |\beta_i|,  |\beta_j|< |\beta_s|, k=1, \ldots, n\}$ for $i=1, \ldots , \mult, \; |\alpha|<|\beta_i|$.
\end{lemma}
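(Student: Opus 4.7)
The plan is to reduce the first assertion directly to Theorem \ref{thm:hilbint} and to handle the polynomial-dependence claim by induction on $|\beta_i|$.

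For the first assertion, I will use that $B$ is closed under division and ordered by increasing degree, so $\beta_1=0$ and $x^{\beta_1}=1\in B$. The duality conditions then force $\Lambda_1\in\DRg_0$ with $\Lambda_1(1)=1$, giving $\Lambda_1=1$. Since $\D\in\Hilb_B\subset\Hilb_\mult$ is stable under derivation and $\bLambda$ is the basis of $\D$ dual to $B$, the implication $(1)\Rightarrow(2)$ of Theorem \ref{thm:hilbint} furnishes \eqref{eq:newLambda} for all $i\geq 2$, which is precisely the integral recurrence claimed.

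For the polynomial-dependence statement, I would induct on $|\beta_i|$. The base case $|\beta_1|=0$ is immediate: the only relevant coefficient of $\Lambda_1=1$ is $\mu_{\beta_1,0}=1$. For the inductive step, I would extract the coefficient of $\bd^\alpha/\alpha!$ from both sides of \eqref{eq:newLambda}. Using the definition \eqref{eq:shortint} of $\tint{k}{\cdot}$, a direct bookkeeping computation shows that the coefficient of $\bd^\alpha/\alpha!$ in $\tint{k}{\Lambda_j}$ equals $\mu_{\beta_j,\alpha-\e_k}$ when $\alpha_k\geq 1$ and $\alpha_{k+1}=\cdots=\alpha_n=0$, and vanishes otherwise. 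Substituting this back into \eqref{eq:newLambda} yields a bilinear recursion of the shape
\[
\mu_{\beta_i,\alpha}=\sum_{|\beta_j|<|\beta_i|}\sum_{k}\mu_{\beta_i,\beta_j+\e_k}\,\mu_{\beta_j,\alpha-\e_k},
\]
with the inner sum running only over valid $k$. The first factor already sits in the claimed generating set (with $s=i$), while the second factor, by the inductive hypothesis applied to $\Lambda_j$ with $|\beta_j|<|\beta_i|$, is a polynomial in $\{\mu_{\beta_s,\beta_{j'}+\e_{k'}}:|\beta_s|\leq|\beta_j|<|\beta_i|\}$. Both families lie in the generating set associated to $i$, so closure of polynomials under sums and products completes the induction.

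I expect the only real obstacle to be the bookkeeping of index conditions — namely, confirming that every triple $(s,j',k')$ that emerges through the induction does satisfy $|\beta_s|\leq|\beta_i|$ and $|\beta_{j'}|<|\beta_s|$, and that the condition $\alpha-\e_k\in\NN^n$ together with $\alpha_{k+1}=\cdots=\alpha_n=0$ is respected when invoking the recurrence. All of the substantive algebra (stability of $\D$ under derivation, the integrability and commutation conditions on the dual basis) is already absorbed into the hypothesis $\D\in\Hilb_B$ via Theorem \ref{thm:hilbint}, so no further conceptual ingredient should be required.
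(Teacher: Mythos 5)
Your proof is essentially sound and takes a closely related route to the paper's, so let me focus on the two places where the routes genuinely diverge and on one small gap worth closing.

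For the integral recurrence, the paper does not invoke Theorem~\ref{thm:hilbint}; instead it directly pins down the constants: since $\D$ is stable under derivation, Proposition~\ref{prop:integral} gives $\partial_{\dv_k}(\Lambda_i)=\sum_{|\beta_s|<|\beta_i|}c_{i,s,k}\Lambda_s$, and evaluating both sides on $\bx^{\beta_j}$ and using $\partial_{\dv_k}(\Lambda_i)(\bx^{\beta_j})=\Lambda_i(\bx^{\beta_j+\e_k})=\mu_{\beta_i,\beta_j+\e_k}$ shows $c_{i,j,k}=\mu_{\beta_i,\beta_j+\e_k}$; the integral formula then follows from~\eqref{eq:closed}. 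Your citation of $(1)\Rightarrow(2)$ in Theorem~\ref{thm:hilbint} ultimately rests on the same Proposition, so it is not wrong, but it is slightly weaker in one respect: as stated in Lemma~\ref{lem:primaldual} (on which Theorem~\ref{thm:hilbint} is built), the scalars in~\eqref{eq:newLambda} are \emph{posited}, not \emph{a priori} identified with the coefficients $\Lambda_i(\bx^{\beta_j+\e_k})$ of the dual basis. The content of Lemma~\ref{lem:coord} is precisely that identification, and citing the theorem without the short evaluation argument ($\partial_{\dv_k}(\Lambda_i)(\bx^{\beta_j})=c_{i,j,k}$) leaves that step implicit. Including that one-line computation makes your argument airtight and essentially identical to the paper's.

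For the polynomial-dependence claim, your induction is more detailed than the paper's terse ``follows recursively,'' and your coefficient-extraction formula for $\tint{k}{\Lambda_j}$ is correct. However, you need to strengthen the induction hypothesis: in the bilinear recursion you must evaluate $\mu_{\beta_j,\alpha-\e_k}$, and when $|\alpha|=|\beta_i|$ (or more generally when the subtraction leaves $|\alpha-\e_k|=|\beta_j|$) this is a \emph{top-degree} coefficient of $\Lambda_j$, whereas your stated hypothesis only covers $|\gamma|<|\beta_j|$. The fix is harmless: the recursion expresses \emph{all} coefficients $\mu_{\beta_j,\gamma}$ with $|\gamma|\le|\beta_j|$ (top-degree ones included) in terms of the generating set attached to indices $\le|\beta_j|$, so run the induction with the hypothesis ``for all $|\gamma|\le|\beta_j|$'' rather than ``$<$.'' This also clarifies that the restriction $|\alpha|<|\beta_i|$ in the lemma's conclusion is not load-bearing for the inductive machinery.
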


\begin{proof}
Since $\D$ is closed under derivation, by Proposition \ref{prop:integral} there exist $c_{i,s,k}\in \CC$ such that  $\partial_{\dv_{k}}(\Lambda_i)=\sum_{|\beta_s|<|\beta_i|} c_{i,s,k}\Lambda_s$. Then
{\small$$
\mu_{\beta_i, \beta_j+\e_k}= \Lambda_i(\bx ^{\beta_j+\e_k}) =\partial_{\dv_{k}}(\Lambda_i)(\bx ^{\beta_j})= \sum_{|\beta_s|<|\beta_i|} c_{i,s,k}\Lambda_s (\bx ^{\beta_j})=c_{i,j,k}.
$$}
The second claim follows from obtaining the coefficients in $\Lambda$ recursively from $\Lambda_1=1$ and
$$ 
\Lambda_i =\sum_{|\beta_j|<|\beta_i|}\sum_{k=1}^n \mu_{\beta_i, \beta_j+\e_k} \tint{k}{ \Lambda_j}
\textup{ for $i=2, \ldots , \mult$.}
$$
\end{proof}

We define
$\mu:=\{\mu_{\beta_i, \beta_j+\e_k}\}_{i,j=1, \ldots r, |\beta_j|< |\beta_i|, k=1, \ldots, n}$,
$\mu_t:=\{\mu_{\beta_i, \beta_j+\e_k}\in
\mu\;:\;|\beta_i|\leq t \}\subset \mu$ and
$\mu_{[t]}:=\{\mu_{\beta_i,\beta_j+\e_k}\in \mu\;:\; |\beta_j|=
t\}\subset \mu_t$.  The next definition uses the fact  that Equations
\eqref{eq:h1} and  \eqref{eq:h2} are linear in $\nu_{j}^k$ with coefficients depending on $\mu_{t-1}$:

\begin{definition}\label{def:H}
Given $\D_{t-1}\in \Hilb_{B_{t-1}}$ with a unique basis
$\bLambda_{t-1}=\{\Lambda_1, \ldots, \Lambda_{r_{t-1}}\}$ with
$\Lambda_i=\sum_{|\alpha|< t} \mu_{\beta_i,
  \alpha}\frac{\bd^\alpha}{\alpha!}$ for $j=1, \ldots, r_{t-1}$ that
is dual to $B_{t-1}$, uniquely determined by
$\mu_{t-1}=\{\mu_{\beta_i, \beta_j+\e_k}:\;|\beta_i|\leq t-1,
|\beta_j|< |\beta_i|  \}$ as above. Recall from Notation \ref{not:HtJt} that  $H_t$ is the coefficient matrix of the homogeneous linear system \eqref{eq:h1} and \eqref{eq:h2} in the variables $\{\nu_j^k\;:\; j=1, \ldots, r_{t-1}, \; k=1, \ldots, n\}$. To emphasize the dependence of its coefficients on $\D_{t-1}$ or $\mu_{t-1}$ we use the notation   $H_t(\D_{t-1})$ or  $H_t(\mu_{t-1})$.   For $\D\in \Hilb^{reg}_{\hb}$ in an open subset, the rank $\rho_{t}$ of
$H_{t}(\D_{t-1})$ is maximal. 
\end{definition}

The next definition describes a property of a monomial set $B$ such that it will allow us to give a rational parametrization of $\Hilb_{B}$.

\begin{definition}\label{def:non-deg}
 For $t=1,\ldots,\order=\deg(B)$ we say that $\D_t\in \Gr_{r_t}(\DRg_{t})$ is regular for $B_t$
if,
\begin{itemize}[leftmargin=*,labelsep=.3cm,align=left]
   \item $\dim(\D_{t}) = r_{t} = |B_{t}|$,
   \item $\textup{rank}\, H_{t}(\D_{t-1})=\rho_{t}$ the generic rank
     of $H_{t}$ on $\Hilb_{\hb_{t}}^{reg}$,
   \item $\Delta_{B_{[t]}}(\D_{[t]}) \neq 0$ where
     $\Delta_{B_{[t]}}(\D_{[t]})$ is the Pl\"ucker coordinate of $\D_{[t]}\in \Gr_{h_{t}}(\DRg_{r})$ corresponding to the monomials in
     $B_{[t]}$.
\end{itemize}
Let $U_{t}:=\{\D_{t}\in \Hilb^{reg}_{\hb_{t}}\;:\; \D_t \text{ is regular for } B_t\}$.
Then  $U_{t}$ is either an open dense subset of the
irreducible variety $\Hilb^{reg}_{\hb_{t}}$ or empty if
$\Delta_{B_{[t]}}(\D_{[t]})=0$ for all $\D\in \Hilb^{reg}_{\hb_{t}}$. 
We say that $B$ is a {\em  regular basis} if
$\overline{U_{t}}=\Hilb^{reg}_{\hb_{t}}$ (or $U_{t}\neq \emptyset$)
for $t=1, \ldots, \order$.

We denote by $\gamma_{[t]}=\dim \Gr_{h_t}( \ker H_t(\D_{t-1}))$ for $\D_{t-1}\in U_{t-1}$ and $\gamma=\sum_{t=0}^\order \gamma_{[t]}$.
\end{definition}

If the basis $B$ is regular and closed under division, then
$\Hreg_{\hb}$ can be parametrized by rational functions of free
parameters $\barmu$. We present hereafter Algorithm~\ref{alg:RP} to compute
such a parametrization iteratively.
\begin{algorithm}
\caption[RP]{\sc Rational Parametrization - Iteration $t$ }
\label{alg:RP}
\begin{flushleft}\noindent{\bf Input:} $t> 0$,  $B_t=\{\bx^{\bbeta_1}, \ldots, \bx^{\bbeta_{r_t}}\}\subset \CC[\bx]_t$ closed under division and regular,  $\barmu_{t-1}\subset \mu_{t-1}$  and $\Phi_{t-1}: \barmu_{t-1}\mapsto \left({q_{\beta_j, \alpha}(\barmu_{t-1})}\right)_{|\beta_j|\leq t-1, |\alpha|<\mult}$ with $q_{\beta_j, \alpha}\in \QQ(\barmu_{t-1})$ parametrizing a dense subset of $\Hreg_{\hb_{t-1}}$.\\
{\bf Output:} $\barmu_{t}\subset \mu_{t}$ and $\Phi_{t}:\barmu_{t}\mapsto \left({q_{\beta_j, \alpha}}\right)_{|\beta_j|\leq t, |\alpha|<\mult}$, $q_{\beta_j, \alpha}{\in} \QQ(\barmu_{t})$ extending $\Phi_{t-1}$ and parametrizing a dense subset of $\Hreg_{\hb_t}$.\\
\hrule\vspace{2mm}
\textbf{(1)} Let $H_t$ be as in Notation \ref{not:HtJt}, $\nu=[\nu_j^k: j=1, \ldots, r_{t-1}, k=1, \ldots, n]^T$. Decompose $H_t(\Phi_{t-1}(\barmu_{t-1}))\cdot \nu=0$ as
\begin{equation}\label{eq:matrixABC2}
\left[
\begin{array}{c|c|c}
A(\barmu_{t-1})& B(\barmu_{t-1}) & C(\barmu_{t-1})
\end{array}
\right]
\left[
\begin{array}{c}
  \nu'\\
  \nu''\\
  \barnu
\end{array}
\right] = 0,
\end{equation}
where $\nu'$ is associated
to a maximal set of independent columns of
$H_{t}(\Phi_{t-1}(\barmu_{t-1}))$,
$\nu''=\{\nu_j^k: \;\xb^{\beta_{j}+\e_{k}}\in B_{[t]}\}$ and
$\barnu$ refers to the rest of the columns.
 If no such decomposition exists, return ``$B_t$ is not regular''.
\\

\textbf{(2)} For $\nu_j^k\in \nu'$ express $\nu_j^k=\varphi_j^k(  \barnu,\nu'')\in \QQ(\barmu_{t-1})[ \barnu,\nu'']_1$ as the generic solution of the system     $H_t(\Phi_{t-1}(\barmu_{t-1}))\cdot \nu=0$.\\

\textbf{(3)} For $i=r_{t-1}+1,\ldots, r_t$ do:
\begin{itemize}
 \item[(3.1)] Define $\barmu_{[t],i}:= \left\{ \mu_{\beta_i, \beta_j+\e_k}\;:\; \nu_{j,k}\in \barnu\right\}$, $\mu_{[t],i}'=\{\mu_{\beta_{i},\beta_{j}+\e_{k}}: \nu_j^k\in \nu'\}$, $\mu''_{[t],i}=\{\mu_{\beta_{i},\beta_{j}+\e_{k}}: \nu_j^k\in \nu''\}$, and
 $\displaystyle \barmu_t:=\barmu_{t-1}\cup\bigcup\nolimits_{i=r_{t-1}+1}^{r_t} \barmu_{[t],i} .
$
\item[(3.2)]  For  $\mu_{\beta_{i},\beta_{j}+\e_{k}}\in \mu''_{[t],i}$
set $q_{\beta_i, \beta_j+\e_k}=\mu_{\beta_{i},\beta_{j}+\e_{k}}= 1$ if
$\beta_{i}=\beta_{j}+\e_{k}$ and $0$ otherwise.
\item[(3.3)] For $\mu_{\beta_{i},\beta_{j}+\e_{k}}\in \mu'_{[t],i}$ define
$$q_{\beta_i, \beta_j+\e_k}:= \varphi_j^k(  \barmu_{[t],i},\mu''_{[t],i})\in \QQ (\barmu_t)
$$
\item[(3.4)] For $|\alpha|<r$ and $\mu_{\beta_{i},\alpha}\not\in  \mu_t$ find
  $q_{\beta_i,\alpha}$ using Lemma \ref{lem:coord}.
\end{itemize}
\end{flushleft}
\end{algorithm}

\begin{proposition}\label{prop:param} Let $B=\{\bx^{\beta_1}, \ldots,
  \bx^{\beta_\mult}\}\subset \Rg_{\mult-1}$ be closed under division and
  assume that $B$ is a regular basis. 
  There exist a subset $\barmu\subset \mu$ with $|\barmu|=\gamma$ and rational functions $ q_{\beta_j,\alpha}(\barmu)\in \QQ(\barmu)$ for
$j=1, \ldots, \mult$ and $|\alpha|< \mult$, such that the map $\Phi:\CC^\gamma\rightarrow \Hilb_B$ defined by
$$
\Phi:\barmu \mapsto \left({q_{\beta_j, \alpha}(\barmu)}\right)_{j=1, \ldots, \mult, |\alpha|<\mult}
$$
parametrizes a dense subset of $\Hreg_{\hb}$.
\end{proposition}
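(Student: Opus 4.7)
The plan is to prove the proposition by induction on $t = 0, 1, \ldots, \order$, showing that Algorithm \ref{alg:RP} produces at each step a rational parametrization $\Phi_t$ of a dense subset of $\Hreg_{\hb_{\vspanh{t}}}$ by free parameters $\barmu_t \subset \mu_t$. The base case $t = 0$ is immediate since $\Hreg_{\hb_{\vspanh{0}}} = \{\DRg_0\}$ is a point and $\barmu_0 = \emptyset$. For the inductive step I would first invoke the regularity of $B$ (Definition \ref{def:non-deg}) to ensure that the image of $\Phi_{t-1}$ meets the dense open subset $U_{t-1} \subset \Hreg_{\hb_{\vspanh{t-1}}}$ where $H_t(\D_{t-1})$ attains its maximal generic rank $\rho_t$. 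On this subset the column decomposition in step (1) of the algorithm is valid and Cramer's rule applied to the invertible submatrix $A(\barmu_{t-1})$ produces the linear forms $\varphi_j^k \in \QQ(\barmu_{t-1})[\barnu, \nu'']_1$ of step (2), with denominators not vanishing identically on the image.

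Next I would verify that the image of $\Phi_t$ lies in $\Hilb_B$ by checking the three conditions of Theorem \ref{thm:hilbint}. Each new $\Lambda_i$ constructed in step (3) has the integral form \eqref{eq:newLambda} by construction. Because its coefficient vector lies in $\ker H_t(\mu_{t-1})$, Proposition \ref{prop:integral} together with Lemma \ref{lem:primaldual} yield $\partial_{\dv_k}(\Lambda_i) \in \D_{t-1}$, so $\D_t$ is closed under derivation and the commutation relations \eqref{eq:commnew} are satisfied. Step (3.2) enforces the duality constraints \eqref{eq:orth}, and step (3.4) together with Lemma \ref{lem:coord} extends the coefficients to the full set of $q_{\beta_i,\alpha}$ for $|\alpha| < \mult$ as polynomial expressions in the previously defined coordinates.

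For density, I would show that the image of $\Phi_t$ covers the open subset $U_t$ of Definition \ref{def:non-deg}. A generic $\D_t \in \Hreg_{\hb_{\vspanh{t}}}$ has $\pi_t(\D_t) = \D_{t-1} \in U_{t-1}$ by definition of $\Hreg$, so it lies over the image of $\Phi_{t-1}$. The fiber $\pi_t^{-1}(\D_{t-1})$ is the Grassmannian $\Gr_{h_t}(\ker H_t(\D_{t-1}))$ of dimension $\gamma_{[t]}$, and the orthogonality values fixed in step (3.2) cut out the standard affine chart of this Grassmannian, exactly parametrized by the free coordinates $\bigcup_{i=r_{t-1}+1}^{r_t} \barmu_{[t], i}$. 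The parameter count $|\barmu_t| = |\barmu_{t-1}| + \gamma_{[t]}$ confirms that $\Phi_t$ is dominant onto $\Hreg_{\hb_{\vspanh{t}}}$, closing the induction and giving the global parametrization with $|\barmu| = \gamma$.

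The main obstacle will be the density verification, specifically the claim that the particular pivot choice in step (1) combined with the indicator values in step (3.2) produces a standard chart on the fiber Grassmannian covering the generic point of $\Hreg_{\hb_{\vspanh{t}}}$. This relies crucially on the regularity hypothesis on $B$, which ensures both that $\Delta_{B_{[t]}}(\D_{[t]}) \neq 0$ generically (so the duality pivots are non-degenerate) and that the submatrix $A(\barmu_{t-1})$ selected in step (1) is nonsingular on a dense open subset of $\Hreg_{\hb_{\vspanh{t-1}}}$; when either condition fails, step (1) correctly returns \emph{``$B_t$ is not regular''} and no such chart exists.
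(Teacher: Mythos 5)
Your proposal is correct and takes essentially the same route as the paper's own proof: the same induction on $t$ through Algorithm~\ref{alg:RP}, the same block decomposition of $H_t$ (as in \eqref{eq:matrixABC2}) enabled by regularity of $B$, and the same count of $\gamma_{[t]}$ new free parameters per degree. The only cosmetic difference is in the density step, which you argue bottom-up (a generic $\D_t\in\Hreg_{\hb_{\vspanh{t}}}$ sits over $U_{t-1}$ and is captured by the affine chart on the fiber Grassmannian) whereas the paper argues top-down ($\overline{\mathrm{im}\,\Phi_t}$ is an irreducible subvariety projecting onto $\Hreg_{\hb_{\vspanh{t-1}}}$, hence equals $\Hreg_{\hb_{\vspanh{t}}}$ by Definition~\ref{def:hregt}); both rest on the same identification of the fibers over $U_{t-1}$ with Grassmannians of constant dimension.
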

\begin{proof}
Let us define, by induction on $t$, parameters $\barmu_{t}$ with $|\barmu_{t}|=\sum_{i=1}^t \gamma_{[i]}$, and a
rational parametrization of a basis
$\Lambda_{1}(\barmu_{t}), \ldots, \Lambda_{r_{t}}(\barmu_{t})$
of a generic element of $ \Hreg_{B_t}$.
For $t=0$, we define $\Lambda_{1}=1$ and
$\barmu_{0}=\emptyset$. Assume that there exist $\barmu_{t-1}\subset \mu_{t-1}$ and
a rational parametrization
$\Lambda_{1}(\barmu_{t-1}), \ldots, \Lambda_{r_{t-1}}(\barmu_{t-1})$  of a
basis dual to  $B_{t-1}$ for a generic element  $\Hilb_{B_{t-1}}$ defined by the map
$$
\Phi_{t-1}: \barmu_{t-1}\mapsto \left({q_{\beta_j, \alpha}(\barmu_{t-1})}\right)_{|\beta_j|\leq t-1, |\alpha|<\mult}.
$$
This means that $\overline{\textup{im}\, \Phi_{t-1}}=\Hilb_{B_{t-1}}$.
Denote by\\ $\D_{t-1}(\barmu_{t-1})\in \Gr_{r_{t-1}} (\QQ(\barmu_{t-1})[\bd]_{t-1})
$ the space spanned by\\
$\{\Lambda_{1}(\barmu_{t-1}), \ldots,
\Lambda_{r_{t-1}}(\barmu_{t-1})\}$ over the fraction field $\QQ(\barmu_{t-1})$.

By Theorem \ref{thm:hilbint} and Lemma \ref{lem:primaldual}, to
define $\barmu_{t}$ and to extend $\D_{t-1}(\barmu_{t-1})$ to $\D_t(\barmu_{t})$, we need to find $\Lambda_{r_{t-1}+1}, \ldots, \Lambda_{r_{t}}$
 of the form
$$
\Lambda_{i} = \sum_{j=1}^{r_{t-1}}\sum_{k=1}^{n}\mu_{\beta_i,\beta_j+\e_k} \tint{k} \Lambda_{j}(\barmu_{t-1})\;\; i= r_{t-1}+1, \ldots, r_t,
$$
 satisfying the system of equations \eqref{eq:(1)} and
\eqref{eq:(2)}, i.e. such that
$$\Lambda_i\in \ker H_t(\barmu_{t-1})
\textup{  for } i= r_{t-1}+1, \ldots, r_t,
$$
where $H_t(\barmu_{t-1})=H_t\left(\Phi_{t-1}(\barmu_{t-1})\right)$ and
Equations \eqref{eq:fperp} are satisfied.
%
Since $B$ is a regular basis, the kernel of
$H_{t}(\barmu_{t-1})$ over $\QQ(\barmu_{t-1})$ contains a subspace
$\D_{[t]}$ of dimension $h_{t}= |B_{[t]}|$ with
$\Delta_{B_{[t]}}(\D_{[t]})\neq 0$. Therefore, the systems
$H_{t}(\barmu_{t-1})\, \nu=0$ with $\nu=[\nu_j^k: j=1, \ldots, r_{t-1}, k=1, \ldots, n]^T$ can
be decomposed as
\begin{equation}\label{eq:matrixABC}
\left[
\begin{array}{c|c|c}
A(\barmu_{t-1})& B(\barmu_{t-1}) & C(\barmu_{t-1})
\end{array}
\right]
\left[
\begin{array}{c}
  \nu'\\
  \nu''\\
  \barnu
\end{array}
\right] = 0,
\end{equation}
where $\nu'$ is associated
to a maximal set of independent columns of $H_{t}(\barmu_{t-1})$, $\nu''=\{\nu_j^k: \;\xb^{\beta_{j}+\e_{k}}\in B_{[t]}\}$  and
$\barnu$ is associated to the remaining set of columns. Note that $|\barnu|=\dim(\ker H_{t}(\barmu_{t-1}))-h_t$.
Thus, $\nu'' \cup \barnu$ is the set of free variables of the homogeneous system $H_{t}(\barmu_{t-1})\, \nu=0$ and  a general solution is such that
the variables in $\nu'$ are linear functions of the variables in
$\nu''$ and $\barnu$, with rational coefficients in $\barmu_{t-1}$.

We obtain the coefficients of $\Lambda_{r_{t-1}+1}, \ldots, \Lambda_{r_t}$ that satisfy equations \eqref{eq:(1)} and
\eqref{eq:(2)} and \eqref{eq:fperp} from the general solutions of $H_{t}(\barmu_{t-1})\, \nu=0$ by further specializing the variables in $\nu''$
to $0$'s and $1$'s, according the duality conditions.
Define
$$\barmu_{[t],i}:= \left\{ \mu_{\beta_i, \beta_j+\e_k}\;:\; \nu_{j,k}\in \barnu\right\}\subset \mu_{[t]} \, . $$
Thus, the parameters in $\mu_{[t]}$  are  linear functions of $\barmu_{[t],i}$ with
rational coefficients in $\barmu_{t-1}$. The denominator in these
coefficients is a factor of the numerator of a maximal non-zero minor of
$A(\barmu_{t-1})$. Note that the rest of the coefficients of $\Lambda_i$  are polynomial functions of the parameters $\mu_{t-1}\cup \mu_{[t]}$ by Lemma \ref{lem:coord}. Define
 $$\barmu_t:=\barmu_{t-1}\cup\bigcup_{i=r_{t-1}+1}^{r_t} \barmu_{[t],i} .
$$
Thus, we get a parametrization of the coefficients of $\Lambda_{r_{t-1}+1}(\barmu_t), \ldots, \Lambda_{r_t}(\barmu_t)$ in terms of $\barmu_t$,  which defines the degree $t$ part of the map $\Phi_t:\barmu_t\mapsto (q_{\beta_j,\alpha}(\barmu_t))_{|\beta_j|\leq t, |\alpha|<r}$.   For $\D_{t}\in \Hilb_{B_{t}}$, the coefficients of its basis dual to $B_t$  can be parametrized by $\Phi_{t}$  for parameter values $\barmu_{t}$ such that a maximal
non-zero minor of $A(\barmu_{t-1})$ in $\QQ(\barmu_{t-1})$ does not vanish.

Note that the number of new parameters introduced is
$$
|\barmu_t\setminus \barmu_{t-1}|=(r_{t}-r_{t-1})\cdot|\barmu_{[t],i}|=h_t\left(\dim \ker H_t(\barmu_{t-1})-h_t\right)
$$
which is equal to $\gamma_{[t]}=\dim \Gr_{h_t}( \ker H_t(\barmu_{t-1}))
=\dim \Gr_{h_t}( \ker H_t(\D_{t-1}))$ for $\D_{t-1}$ generic
in $U_{t-1}$ as claimed.

To prove that $\Phi_{t}$ parametrizes a dense subset of the projective
variety $\Hreg_{\hb_{\vspanh{t}}}$, note that  the image ${\rm
  im}(\Phi_{t})$ of $\Phi_{t}$ is a subset of
$\Hilb_{\hb_{\vspanh{t}}}$, the Zariski closure $V_{t}$ of  ${\rm
  im}(\Phi_{t})$ is an irreducible subvariety of
$\Hilb_{\hb_{\vspanh{t}}}$. Furthermore, its projection
$\pi_{t-1}(V_{t}) \subset \Hilb_{\hb_{\vspanh{t-1}}}$ is the closure of
the image of ${\rm im}(\Phi_{t-1})$ since if $\D_{t}= {\rm im} \Phi_{t}(\barmu_{t}^{*})$
then $\D_{t-1} = \D_{t} \cap \DRg_{t-1}= \Phi_{t-1}(\barmu_{t-1}^{*})$.
By induction hypothesis,
$$\pi_{t-1}(V_{t})= \overline{{\rm im}
    \Phi_{t-1}}= \Hreg_{\hb_{\vspanh{t-1}}} \, . $$
Thus, $V_{t}$ is the irreducible
component of $\Hilb_{\hb_{\vspanh{t}}}$ which projects onto
$\Hreg_{\hb_{\vspanh{t-1}}}$, that is $\Hreg_{\hb_{\vspanh{t}}}$.
\end{proof}



\begin{definition}\label{def:minor-regular}
We denote by $\Areg_{t}(\mu)$ {\em a} maximal square submatrix of $A$ in \eqref{eq:matrixABC2} such that
$\det(\Areg_{t}(\barmu_{t-1})) \neq 0$.
\end{definition}
The size of $\Areg_{t}(\mu)$ is the size of $\nu'$ in
\eqref{eq:matrixABC2}, that is the maximal number of independent
columns in $H_{t}(\barmu_{t-1})$.
Given an element $\D=\Lambda_{1}\wedge \cdots \wedge \Lambda_{\mult}\in
\Gr_{\mult}(\DRg_{r-1})$, in order to check that $\D$ is regular for
$B$, it is sufficient to check first that $\Delta_{B}(\D)\neq 0$ and
secondly that $|\Areg_{t}(\mu)|\neq 0$ for all $t=0, \ldots, \order$,
where $\mu =(\mu_{\beta,\alpha})$ is the ratio of Pl\"ucker
coordinates of $\D$ defined by the formula \eqref{eq:plucker}.

\section{Newton's iterations}\label{sec:Newton}

In this section we describe the extraction of a square, deflated
system that allows for a Newton's method with quadratic convergence.
We assume that the sole input is the equations $\fb=(f_1, \ldots, f_N)\in\CC[\bx]^N$,
an approximate point $\xi\in\CC^n$ and a tolerance $\varepsilon>0$.

Using this input we first compute an approximate primal-dual pair
$(B,\,  \bLambda)$ by applying the iterative
Algorithm~\ref{AlINT}. The rank and kernel vectors of the matrices
$K_t$ (see Algorithm~\ref{AlINT}) are computed numerically within
tolerance $\varepsilon$, using SVD.
Note that here and in Section 6 we do not need to certify the SVD computation but we are only using SVD to certify that some matrices are full rank by checking that the distance to the variety of singular matrices is bigger than the perturbation of the matrix. Thus we need a weaker test, which relies only on a lower bound of the smallest singular value.

The algorithm returns a basis
$B=\{\bx_\xi^{\beta_1}, \ldots, \bx_\xi^{\beta_\mult}\}$ with exponent vectors $E=\{\beta_1, \ldots, \beta_\mult\}$,
as well as approximate values for the parameters $\mu=\{
\mu_{\bbeta_i, \bbeta_j+\e_k}\;:\;|\beta_j|<|\beta_i|\in E, \;k=1, \ldots, n\}$.
 These parameters will be used as a
starting point for Newton's iteration. Note that, by looking at
$B$, we can also deduce the multiplicity $\mult$, the maximal order
$\order$ of dual differentials, the sequences  $r_t=|B_t|$,  and  $h_t=|B_{[t]}|$ for $t=0, \ldots, \order$.

Let $F$ be the deflated system with variables $(\bx,\mu)$ defined by
the relations \eqref{eq:newLambda} and Equations \eqref{eq:commnew},
\eqref{eq:orth} and  \eqref{eq:fperp} i.e.
{\footnotesize
$$
  {F(\bx{,}\mu){=}}
  \left\{\hspace{-.19cm}
\begin{array}{ll}
\sum\limits_{|\beta_s|<|\beta_j|<|\beta_i|}\mu_{\bbeta_i,
  \bbeta_j+\e_k}\mu_{\beta_j, \beta_s+\e_l}-\mu_{\bbeta_i,
  \bbeta_j+\e_l}\mu_{\beta_j, \beta_s+\e_k} {=}0 &{(a)}\label{eq:(a)}\\
\hspace{0.5cm}  \text{ for all } i=1, \ldots, \mult, \,
  |\beta_s|<|\beta_i|, k\neq l\in \{1, \ldots, n\}\nonumber & \\
\mu_{\bbeta_i, \bbeta_j+\e_k}=\begin{cases} 1 & \text{ for } \bbeta_i= \bbeta_j+\e_k\\
0 & \text{ for } \bbeta_j+\e_k \in E, \; \bbeta_i \neq \bbeta_j+\e_k
,\end{cases}& (b)\label{eq:(b)}\\
\Lambda_i ( {f}_{j})=0,\quad i=1, \ldots, \mult,\, j=1, \ldots, N. & (c)\label{eq:(c)}
  \end{array}
\right.
$$
}{}
Here $\Lambda_1=1_\bx$ and $\Lambda_i=\sum_{|\beta_j|<|\beta_i|} \sum_{k=1}^n \mu_{\bbeta_i, \bbeta_j+\e_k}\tint{k}{\Lambda_j}\in\CC[\mu][\bd_\bx]$ denote dual elements with
parametric coefficients defined recursively.  Also, if $\Lambda_i=\sum_{|\alpha|\leq |\beta_i|} \mu_{\beta_i, \alpha}\frac{\bd_\bx^\alpha}{\alpha!}$ then
$$
\Lambda_i ( {f}_{j}) =\sum_{|\alpha|\leq |\beta_i|} \mu_{\beta_i, \alpha}\frac{\bpartial^\alpha(f_j)(\bx)}{\alpha!}
$$
which is in $ \CC[\bx, \mu]$ by Lemma \ref{lem:coord}.  Note, however, that
\eqreftag{(a)} and
\eqreftag{(b)} are polynomials in $\CC[\mu]$, only \eqreftag{(c)}
depends on $\bx$ and $\mu$.  Equations
\eqreftag{(b)} define a simple substitution
into some of the parameters $\mu$.  Hereafter, we explicitly substitute them and
eliminate this part \eqreftag{(b)} from the equations we consider and reducing the
parameter vector $\mu$.

By Theorem \ref{thm:isolated}, if $B$ is a graded primal basis for $\fb$ at the root $\xi^*$ then the above overdetermined system
has a simple root at a point $(\xi^*,\mu^*)$.

To extract a square subsystem defining the simple root $(\xi^*,\mu^*)$
in order to certify the convergence, we choose a maximal set of
equations whose corresponding rows in the Jacobian are linearly
independent. This is done by extracting first a maximal set of
equations in \eqreftag{(a)} with linearly independent rows in the
Jacobian. For that purpose, we use the rows associated to the
maximal invertible matrix $\Areg_{t}$ (Definition \ref{def:minor-regular}) for each new
basis element $\Lambda_{i} \in \D_{[t]}$ and $t=1,\ldots,\mult$. We
denote by $G_{0}$ the subsystem of \eqreftag{(a)} that correspond to rows of $\Areg_{t}$.

We complete the system of independent equations $G_{0}$ with equations from
\eqreftag{(c)}, using a QR decomposition and thresholding on
the transposed Jacobian matrix of $G_{0}$ and \eqreftag{(c)} at the approximate
root. Let us denote by $F_{0}$ the resulting square system, whose
Jacobian, denoted by $J_0$, is invertible.

For the remaining equations $F_{1}$ of \eqreftag{(c)}, not used to construct
the square system $F_{0}$, define $\Omega=\{(i,j): \Lambda_{i}( f_{j})\in F_1\}$.  We introduce new parameters $\epsilon_{i,j}$ for $(i,j)\in \Omega$ and we
consider the perturbed system
$$
{f}_{i, \epsilon}=  {f}_{i} - \sum_{j\mid (i,j)\in \Omega}\epsilon_{i,j}\,\xb_{\xi}^{\beta_{j}}.
$$
The perturbed system is $\fb_{\epsilon} = \fb - \ef\, B$, where
$\ef$ is the $N\times r$ matrix with $[\ef]_{i,j}=\epsilon_{i,j}$ if
$(i,j)\in \Omega$ and $[\ef]_{i,j}=0$ otherwise.
Denote by $F(\xb, \mu, \epsilon)$ obtained from $F(\bx, \mu)$ by replacing 
$\Lambda_j ( {f}_{i})$  by $\Lambda_j ( {f}_{i,
  \epsilon})$  for $j=1, \ldots, \mult, i=1, \ldots, N$.
Then  the equations used to construct the square Jacobian $J_{0}$
are unchanged. The remaining equations are of the form
$$
\Lambda_j ( {f}_{i,
  \epsilon}) =\Lambda_{j}({f}_{i}) - \epsilon_{i,j}=0 \quad (i,j)\in \Omega.
$$Therefore the Jacobian of the complete system $F(\xb, \mu, \epsilon)$
is a square
invertible matrix of the form
$$
J_{\epsilon}:=\left(
\begin{array}{cc}
  J_{0} & 0\\
  J_{1} &  \mathrm{Id}\\
\end{array}
\right)
$$
where $J_{1}$ is the Jacobian of the system $F_{1}$ of polynomials
$\Lambda_{j}({f}_{i})\in \CC[\xb,\mu]$ with $(i,j)\in \Omega$.

Since $J_{\epsilon}$ is invertible, the square extended system $F(\xb, \mu, \epsilon)$
has an isolated root $(\xi^{*}, \mu^{*}, \epsilon^{*})$ corresponding to the
isolated root $(\xi^{*}, \mu^{*})$ of  the square system $F_{0}$. Furthermore,
 $\Lambda_{j}^{*}(f_{i})=\epsilon^*_{i,j}=0$ for
$(i,j)\in \Omega$. Here $\Lambda^*_1, \ldots, \Lambda^*_\mult\in \CC[\bd_{\xi^*}]$ are defined from  $(\xi^*,\mu^{*})$ recursively by
\begin{eqnarray}\label{eq:recursive}
\Lambda_1^*=1_{\xi^*}\text{ and }\Lambda_i^*=\sum_{|\beta_j|<|\beta_i|} \sum_{k=1}^n \mu^*_{\bbeta_i, \bbeta_j+\e_k}\tint{k}{\Lambda^*_j}.
\end{eqnarray}

We have the following property:
\begin{theorem}\label{thm:pert}\label{thm:perturbed system}
If the Newton iteration {\small $$(\xi_{k+1}, \mu_{k+1}) = (\xi_{k}, \mu_{k})-
J_{0}(\xi_{k}, \mu_{k})^{-1} F_{0}(\xi_{k}, \mu_{k}),$$} starting from a point $(\xi_{0}, \mu_{0})$  converges
when $k\rightarrow \infty$, to a point $(\xi^{*},\mu^{*})$ such that
$B$ is a regular basis for the inverse system $\D^{*}$ associated to
$(\xi^{*},\mu^{*})$ and $\D^{*}$ is complete for $\fb$,
then there exists a perturbed system ${f}_{i, \epsilon^{*}}= {f}_{i} - \sum_{j\mid (i,j)\in
  \Omega}\epsilon^{*}_{i,j}\,\xb_{\xi^{*}}^{\beta_{j}}$ with
$\epsilon^{*}_{i,j} = \Lambda_{j}^{*}(f_{i})$ such that $\xi^{*}$ is a
multiple root of $ f_{i,\epsilon^*}$ with the multiplicity structure
defined by $\mu^{*}$.
\end{theorem}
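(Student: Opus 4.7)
The plan is to show that the triple $(\xi^{*}, \mu^{*}, \epsilon^{*})$, with $\epsilon^{*}_{i,j} := \Lambda^{*}_{j}(f_{i})$ for $(i,j)\in\Omega$ and $\epsilon^{*}_{i,j}:=0$ otherwise, satisfies all the equations (a), (b), (c) of the extended system $F(\bx,\mu,\epsilon)$ with $f_{i}$ replaced by $f_{i,\epsilon^{*}}$. Combined with Lemma~\ref{lem:primaldual} and Proposition~\ref{prop:hilb}, this identifies $\D^{*} := \sp(\bLambda^{*})$ with the dual of an $\m_{\xi^{*}}$-primary ideal $Q^{*}$ that contains $\fb_{\epsilon^{*}}$ locally at $\xi^{*}$, and the completeness hypothesis upgrades this inclusion to the equality needed to conclude.

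First I would verify (a) and (b). Convergence of the Newton iteration gives $F_{0}(\xi^{*},\mu^{*})=0$, and in particular $G_{0}(\mu^{*})=0$. For each degree $t$ and each new basis element $\Lambda_{i}$ with $|\beta_{i}|=t$, the equations (a) are linear in the new coordinates $\{\mu_{\beta_{i},\beta_{j}+\e_{k}}\}_{|\beta_{j}|<t}$, with coefficient matrix $H_{t}(\mu^{*}_{t-1})$ from Notation~\ref{not:HtJt}. The regularity of $B$ for $\D^{*}$ implies that $H_{t}(\mu^{*}_{t-1})$ attains its maximal generic rank, and that the rows indexed by $\Areg_{t}$ form a basis of its row space. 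Since $G_{0}$ is built precisely from those rows, its vanishing forces every row of $H_{t}$ to vanish, and therefore all equations (a). Equations (b) hold by the explicit substitution used in the construction of $F$. By Lemma~\ref{lem:primaldual}, $(B,\bLambda^{*})$ is thus a graded primal-dual basis pair and $\D^{*}$ is stable under the derivations $\partial_{\dv_{k}}$.

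Next I would verify (c) for the perturbed system, using the duality relation $\Lambda_{j}^{*}(\bx_{\xi^{*}}^{\beta_{k}}) = \delta_{j,k}$ that follows from the graded primal-dual property just established. For $(i,j)\in\Omega$,
\begin{align*}
\Lambda_{j}^{*}(f_{i,\epsilon^{*}})
 &= \Lambda_{j}^{*}(f_{i}) - \sum_{k\,:\,(i,k)\in\Omega} \epsilon^{*}_{i,k}\,\Lambda_{j}^{*}(\bx_{\xi^{*}}^{\beta_{k}}) \\
 &= \Lambda_{j}^{*}(f_{i}) - \epsilon^{*}_{i,j} \;=\; 0
\end{align*}
by the very definition of $\epsilon^{*}_{i,j}$. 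For $(i,j)\notin\Omega$, the equation $\Lambda_{j}^{*}(f_{i})=0$ is one of the rows of $F_{0}$ and so vanishes at the limit, while the residual sum contains no $k=j$ term; hence $\Lambda_{j}^{*}(f_{i,\epsilon^{*}})=0$ as well.

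Finally, by Lemma~\ref{lem:primaldual} and Proposition~\ref{prop:hilb}, $\D^{*}\in\Hilb_{\mult}$ and $Q^{*}:=(\D^{*})^{\perp}$ is an $\m_{\xi^{*}}$-primary ideal with $\dim \Rg/Q^{*}=\mult$. The previous step yields $(\fb_{\epsilon^{*}})\subset Q^{*}$ locally at $\xi^{*}$, so the primary component $\tilde Q$ of $(\fb_{\epsilon^{*}})$ at $\xi^{*}$ satisfies $\tilde Q\subset Q^{*}$. The completeness of $\D^{*}$ for $\fb$, together with the observation that the perturbation subtracts only monomials $\bx_{\xi^{*}}^{\beta_{k}}$ with $\beta_{k}\in E$, is used to deduce that $\ker K_{\order+1}=\{0\}$ remains valid for $\fb_{\epsilon^{*}}$; this forces $\tilde Q=Q^{*}$ and identifies $\bLambda^{*}$ with the full inverse system of $\fb_{\epsilon^{*}}$ at $\xi^{*}$, which is the claim. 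The main obstacle is precisely this transfer of completeness: one must track how the $\epsilon^{*}$-perturbation modifies the rows (3) of $K_{\order+1}$ and argue, using the duality identity and the constraints (2), that a hypothetical extra dual element of order $\order+1$ for $\fb_{\epsilon^{*}}$ would yield a genuine violation of completeness for $\fb$.
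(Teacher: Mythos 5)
Your proposal is correct and follows essentially the same route as the paper: convergence of Newton on $F_0$ gives $G_0(\mu^*)=0$, the regularity hypothesis upgrades this to all of \eqreftag{(a)}, the duality identity $\Lambda_j^*(\bx_{\xi^*}^{\beta_k})=\delta_{j,k}$ (valid once $(B,\bLambda^*)$ is a graded primal–dual pair) then yields $\Lambda_j^*(f_{i,\epsilon^*})=0$ both on $\Omega$ and off it, and completeness transfers from $\fb$ to $\fb_{\epsilon^*}$ because the added perturbation terms lie in $\sp(B)$. The one point you flag as the ``main obstacle'' — that a candidate $\Lambda=\sum_{j,k}\nu_j^k\tint{k}{\Lambda_j^*}$ of order $\order+1$ subject to constraints \eqref{eq:(2)} annihilates every $\bx_{\xi^*}^{\beta_m}$ with $\beta_m\in E$, so rows \eqref{eq:(3)} of $K_{\order+1}$ are unchanged under the perturbation — does go through by exactly the computation you indicate (it is the same coefficient computation as in the proof of Lemma~\ref{lem:primaldual}), and the paper compresses it into a single sentence without spelling it out either.
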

\begin{proof}
If the sequence  $(\xi_{k}, \mu_{k})$ converges to the fixed point $(\xi^{*},
\mu^{*})$, then we have $F_{0}(\xi^{*},\mu^{*})=0$ and in particular,
$G_{0}(\xi^{*},\mu^{*})=0$ where $G_{0}(\xi^{*},\mu^{*})=0$ is the
subset of equations selected from \eqreftag{(a)}.

As $\mu^{*}$ is regular for $B$, if it satisfies
$G_{0}(\xi^{*},\mu^{*})=0$, it must satisfy all equations
\eqreftag{(a)}. Therefore $\mu^{*}$ defines a point
$\D^{*}=\Lambda^{*}_{1}\wedge \cdots \wedge \Lambda^{*}_{\mult}\in
\Hilb^{reg}_{B}$.

As $(\Lambda^{*}_{i})$ is a basis of $\D^{*}$ dual to $B$
and ${f}_{i, \epsilon^{*}}= {f}_{i} - \sum_{j: (i,j)\in
  \Omega}\epsilon^{*}_{i,j}\,\xb_{\xi^{*}}^{\beta_{j}}$ with
$\epsilon^{*}_{i,j} = \Lambda_{j}^{*}(f_{i})$ for $(i,j)\in \Omega$,
we have that if $(i,j)\in \Omega$ then
$\Lambda^*_{j}(f_{i,\epsilon^{*}})=\Lambda^{*}_{j}({f}_{i})-\epsilon^{*}_{i,j}=0$.
Otherwise $\Lambda^*_{j}(f_{i,\epsilon^{*}})= \Lambda^{*}_{j}({f}_{i})$,
since it is one of the
equations selected in \eqreftag{(c)} to construct the system $F_{0}$
and $F_{0}(\xi^{*},\mu^{*})=0$.
This shows that
$$\fb_{\epsilon^{*}}=(f_{i,\epsilon^{*}})_{i=1}^N\subset (\D^{*})^{\perp} \,.$$
Since $\fb_{\epsilon^{*}}$ is obtained from $\fb$ by adding elements
in $B$, the system \eqreftag{(c)}, at order $\order+1$
for $\fb_{\epsilon^{*}}$ and $\fb$ are equivalent.
Thus $\D^{*}$ is complete for $\fb$ and $\fb_{\epsilon}$ and
$\D^{*}= (\fb_{\epsilon^{*}}) ^{\perp} \cap \CC[\bd_{\xi^*}]$ is the inverse
system at $\xi^{*}$ of the system $\fb_{\epsilon^{*}}$.
\end{proof}
\section{Certification}

In this section we describe  how to certify that the Newton iteration
defined in Section \ref{sec:Newton} quadratically converges to a point
that defines an exact root with an exact multiplicity structure of a
perturbation of the input polynomial system $\fb$. More precisely, we
are given
$\fb=(f_1, \ldots, f_N)\in \CC[\bx]^N$, $B=\{\bx^{\beta_1}, \ldots,
\bx^{\beta_\mult}\}\subset \CC[\bx]$ in increasing order of degrees
and closed under division,  $\order:=|\beta_\mult|$. We are also given
the deflated  systems $F(\bx,\mu)$, its square subsystem
$F_0(\bx, \mu)$ defined in Section \ref{sec:Newton} and $F_1(\bx, \mu)$ the remaining equations in $F(\bx,\mu)$. Finally, we are
given $\xi_0\in \CC^n$ and $\mu_0= \{\mu^{(0)}_{\beta_i, \beta_j+\e_k}\in
\CC\;:\; i, j=1, \ldots, \mult, |\beta_j|<|\beta_i|, k=1, \ldots,
n\}$. Our certification will consist of a symbolic and a numeric part:

\vspace{2mm}\noindent{}\textbf{Regularity certification.}
We certify that $B$ is regular (see Definition \ref{def:non-deg}). This part of the certification is purely symbolic and inductive on $t$. Suppose for some  $t-1< \order$ we certified that $B_{t-1}$ is regular and computed the parameters $\barmu_{t-1}$ and the parametrization
$$
\Phi_{t-1}: \barmu_{t-1}\mapsto \left(q_{\beta_i, \alpha}(\barmu_{t-1})\right)_{|\beta_i|\leq t-1, |\alpha|\leq t-1}
$$
(Algorithm \ref{alg:RP}). Then to prove that $B_t$ is regular, we consider the coefficient matrix $H_t$ of equations \eqref{eq:(1)} and \eqref{eq:(2)}. We substitute the parametrization $\Phi_{t-1}$ to get the matrices $H_t(\barmu_{t-1})$.
We symbolically prove that the rows of $\Areg_t(\barmu_{t-1})$ (Definition \ref{def:minor-regular}) are linearly independent and span all rows of $H_t(\barmu_{t-1})$ over $\QQ(\barmu_{t-1})$. If that is certified, we compute the parameters $\barmu_t$ and the parametrization $\Phi_t:\barmu_{t}\mapsto \left(q_{\beta_i, \alpha}(\barmu_{t})\right)_{|\beta_i|\leq t, |\alpha|\leq t}$ as in
Algorithm \ref{alg:RP} inverting the square submatrix $\Areg_t$ of $H_t$ such that the denominators of $q_{\beta_i, \alpha}$ for $|\beta_i|=t$ divide $\det(\Areg_t(\barmu_{t-1}))\neq 0$.

\vspace{2mm}\noindent{}\textbf{Singularity certification.}
\begin{enumerate}
\item[(C1)] We certify that the Newton iteration for the square system
  $F_0$ starting from  $(\xi_0, \mu_0)$ quadratically converges to some
  root $(\xi^*, \mu^*)$ of $F_0$, such that $\|(\xi_0, \mu_0)-(\xi^*,
  \mu^*)\|_2\le \atbeta$, using  $\alpha$-theory.

\item[(C2)] We certify that $\D^*={\rm span}(\bLambda^*)$ is regular
  for $B$ (see Definition  \ref{def:non-deg}), by checking
  that $|\Areg_{t}(\mu^{*})|\neq 0$ for $t=1,\ldots, \order$
  (See Definition \ref{def:minor-regular}), using the Singular Value
  Decomposition of $\Areg_{t}(\mu_0)$ and the distance bound $\atbeta$
  between $\mu^{*}$ and $\mu_0$.

\item[(C3)] We certify that $\bLambda^*$ is complete for $\fb$ at
  $\xi^*$ (see Definition \ref{def:compl}), where $\bLambda^*\subset
  \CC[\bd_{\xi^*}] $ is the dual systems defined from  $(\xi^*,
  \mu^*)$ recursively as in \eqref{eq:recursive}. This is done  by checking that
  $\ker \Acomp(\xi^{*},\mu^{*}) =\{ 0\}$
  (See Definition \ref{def:compl}), using the Singular Value
  Decomposition of $\Acomp(\xi_0,\mu_0)$ and the distance bound $\atbeta$
  between $(\xi^{*},\mu^{*})$ and $(\xi_0,\mu_0)$.

\end{enumerate}
Let us now consider for a point-multplicity structure pair $(\stxi,\stmu)$ $\atgamma := \sup_{k\ge 2} \| DF_{0}^{-1}(\stxi,\stmu)
{D^{k}F_{0}(\stxi,\stmu)\over k!}\|^{{1\over k-1}}$, $\;\;\atbeta := 2 \|
DF_{0}^{-1}(\stxi,\stmu)  \,F_{0}(\stxi,\stmu) \|$, $\atalpha:=\atbeta\, \atgamma$
and for a matrix function $A(\xi,\mu)$, let $\L_{1}(A; \stxi,\stmu;b)$ be a bound on
its Lipschitz constant in the ball $\B_{b}(\stxi,\stmu)$ of radius $b$ around
$(\stxi,\stmu)$ such that
$\|A(\xi,\mu)-A(\stxi,\stmu)\|\leq \L_{1}(A;\stxi,\stmu;b)\, \|(\xi,\mu)-(\stxi,\stmu)\|$
for $(\xi,\mu)\in \B_{b}(\stxi,\stmu)$. For a matrix $M$, let
$\sigma_{min}(M)$ be its smallest singular value. We have the following result:

\begin{theorem}\label{thm:final}\label{th:pert}
Let $B=\{\bx^{\beta_1}, \ldots,
\bx^{\beta_\mult}\}\subset \CC[\bx]$ be closed under division and suppose $B$ is regular. Suppose that $\atalpha < \atalpha_{0} := 0.26141$,\\
$\L_{1}(\Acomp;\stxi,\stmu; \atbeta) \,\atbeta <$ $ \sigma_{\min}(\Acomp(\stxi,\stmu))$
and for $t=1, \ldots, \order$ it holds that
$\L_{1}(\Areg_{t};\stmu; \atbeta)\, \atbeta< \sigma_{min}(\Areg_{t}(\stmu))$. Then the Newton iteration on the square system $F_{0}$ starting from
$(\stxi,\stmu)$ converges quadratically to a point $(\xi^{*},\mu^{*})$ corresponding
to a multiple point $\xi^{*}$ with multiplicity structure
$\mu^{*}$ of the perturbed system $\fb_{\epsilon^{*}}=\fb-\ef^{*}
B_{\xi^{*}}$ such that
$\|\ef^{*}\| \leq \|F_{1}(\stxi,\stmu)\|+
\L_{1}(F_{1};\stxi,\stmu;\atbeta)\, \atbeta$, where $B_{\xi^*}=\{\bx_{\xi^*}^{\beta_1}, \ldots,
\bx_{\xi^*}^{\beta_\mult}\}$.
\end{theorem}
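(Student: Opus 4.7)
The plan is to assemble the conclusion in four steps, combining Smale's $\alpha$-theory for the square system $F_{0}$ with matrix-perturbation arguments to transfer the two non-vanishing singular value conditions from the approximate point $(\stxi,\stmu)$ to the exact limit $(\xi^{*},\mu^{*})$, and then invoke Theorem~\ref{thm:perturbed system}.

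\textbf{Step 1 (quadratic convergence).} Since the hypothesis $\atalpha<\atalpha_{0}=0.26141$ is exactly Smale's point estimate for the square system $F_{0}$ at $(\stxi,\stmu)$, classical $\alpha$-theory yields that the Newton sequence for $F_{0}$ starting at $(\stxi,\stmu)$ is well defined, converges quadratically, and its limit $(\xi^{*},\mu^{*})$ is a simple root of $F_{0}$ with
\[
\|(\xi^{*},\mu^{*})-(\stxi,\stmu)\|\le \atbeta.
\]

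\textbf{Step 2 (regularity and completeness at the limit).} For any matrix-valued function $A(\cdot)$ satisfying the Lipschitz bound of the statement on the ball $\B_{\atbeta}(\stxi,\stmu)$, Weyl's inequality gives
\[
\sigma_{\min}(A(\xi^{*},\mu^{*}))\ \ge\ \sigma_{\min}(A(\stxi,\stmu))-\L_{1}(A;\stxi,\stmu;\atbeta)\,\atbeta\ >\ 0 .
\]
Applying this successively to $A=\Areg_{t}$ for $t=1,\ldots,\order$ shows $\det \Areg_{t}(\mu^{*})\neq 0$; by Definition~\ref{def:minor-regular} and Proposition~\ref{prop:param} this certifies that $\D^{*}={\rm span}(\bLambda^{*})$ lies in $\Hreg_{\hb}$, i.e.\ $\mu^{*}$ is regular for $B$. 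Applying the same argument to $A=\Acomp$ yields $\ker \Acomp(\xi^{*},\mu^{*})=\{0\}$, which is precisely the completeness condition of Definition~\ref{def:compl}: $\bLambda^{*}$ is complete for $\fb$ at $\xi^{*}$.

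\textbf{Step 3 (singular perturbation).} With regularity and completeness at $(\xi^{*},\mu^{*})$ established, the hypotheses of Theorem~\ref{thm:perturbed system} are met. It follows that setting $\epsilon^{*}_{i,j}=\Lambda^{*}_{j}(f_{i})$ for $(i,j)\in\Omega$ and zero otherwise produces a perturbed system $\fb_{\epsilon^{*}}=\fb-\ef^{*}B_{\xi^{*}}$ for which $\xi^{*}$ is an exact multiple root whose multiplicity structure is encoded by $\mu^{*}$ via the recursion \eqref{eq:recursive}.

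\textbf{Step 4 (size of the deformation).} By construction, the non-zero entries of $\ef^{*}$ are exactly the components of $F_{1}(\xi^{*},\mu^{*})$ (recall that $F_{1}$ collects the equations $\Lambda_{j}(f_{i})$ with $(i,j)\in\Omega$ that were dropped when forming $F_{0}$). Using the triangle inequality and the Lipschitz bound on $F_{1}$ together with the $\atbeta$-bound from Step 1,
\[
\|\ef^{*}\|=\|F_{1}(\xi^{*},\mu^{*})\|\le \|F_{1}(\stxi,\stmu)\|+\L_{1}(F_{1};\stxi,\stmu;\atbeta)\,\atbeta,
\]
which is the claimed estimate. The only delicate point in this outline is Step 2: one must be confident that the generic-rank description of $\Areg_{t}$ from Section~\ref{sec:rational} really transfers to the limit, so that non-vanishing of $\det\Areg_{t}(\mu^{*})$ is equivalent to $\D^{*}\in\Hreg_{\hb}$; this is ensured by the standing assumption that $B$ itself is a regular basis, which makes Proposition~\ref{prop:param} applicable.
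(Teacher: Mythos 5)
Your proof proposal is correct and follows essentially the same approach as the paper: $\alpha$-theory for quadratic convergence and the $\atbeta$-bound, the singular-value/Lipschitz perturbation argument (via Weyl's inequality, which is the same as the paper's observation that $\sigma_{\min}$ is the distance to the rank-deficient matrices) to transfer regularity and completeness to the limit, an invocation of Theorem~\ref{thm:pert}, and the triangle-inequality bound on $\|F_{1}\|$. The only difference is that you spell out a bit more explicitly why $\det\Areg_{t}(\mu^{*})\neq 0$ and $\ker\Acomp(\xi^{*},\mu^{*})=\{0\}$ establish the hypotheses of Theorem~\ref{thm:pert}, which the paper leaves mostly implicit.
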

\begin{proof}
By the $\alpha$-theorem \cite{blum_complexity_1998}[Chap. 8, Thm. 1], the Newton iteration on $F_{0}$
starting from $(\stxi,\stmu)$ converges quadratically to a point $(\xi^{*},\mu^{*})$
such that
$$\|(\xi^{*},\mu^{*})-(\stxi,\stmu)\|< \atbeta \, .$$
We deduce
that
{\small$$\begin{array}{rcl}
\|\Acomp(\xi^{*},\mu^{*})-\Acomp(\stxi,\stmu)\|&\leq&
                                                    \L_{1}(\Acomp;\stxi,\stmu; \atbeta)\, \|(\xi^{*},\mu^{*})-(\stxi,\stmu)\| \\
  &<& \sigma_{min}(\Acomp(\stxi,\stmu)).
              \end{array}
            $$}
Therefore $\Acomp(\xi^{*},\mu^{*})$ is within a ball around
$\Acomp(\stxi,\stmu)$ of matrices of maximal rank, since
$\sigma_{min}(\Acomp(\stxi,\stmu))$ is the distance between
$\Acomp(\stxi,\stmu)$ and the set of matrices not of maximal rank.

\noindent Thus $\ker \Acomp(\xi^{*},\mu^{*})= \{0\}$. A similar argument shows that
$|\Areg_{t}(\mu^{*})|\neq 0$ for $t=1, \ldots, \order$.
By Theorem \ref{thm:pert}, $(\xi^{*},\mu^{*})$ defines a multiple root $\xi^{*}$ with multiplicity structure $\mu^{*}$ for the perturbed system
$\fb_{\epsilon^{*}} =\fb - \ef^{*} B_{\xi^{*}}$
with
{\small\begin{eqnarray*}
\|\ef^{*}\|= \|F_{1}(\xi^{*},\mu^{*})\|\hspace{-.2cm} &\le&
\hspace{-.2cm}\|F_{1}(\stxi,\stmu)\|\ + \|F_{1}(\xi^{*},\mu^{*})-F_{1}(\xi^{*},\mu^{*})\|\\
&\le&
\hspace{-.2cm}\|F_{1}(\stxi,\stmu)\| + \L_{1}(F_{1};\stxi,\stmu;\atbeta)\, \|(\xi^{*},\mu^{*})-(\stxi^{*},\stmu^{*})\|\\
&\le&
\hspace{-.2cm}\|F_{1}(\stxi,\stmu)\| + \L_{1}(F_{1};\stxi,\stmu;\atbeta)\, \atbeta. \qedhere
\end{eqnarray*}}
\end{proof}

\section{Experimentation}

In this section we work out some examples with (approximate)
singularities.  The experiments are carried out using Maple, and our
code is publicly available at
\url{https://github.com/filiatra/polyonimo}.

\begin{example}\label{ex:7.3}
  We consider the equations
\begin{equation*}
  f_1 = x_1^3 + x_2^2 + x_3^2 - 1,\,
      f_2 = x_2^3 + x_1^2 + x_3^2 - 1,\,
      f_3 = x_3^3 + x_1^2 + x_2^2 - 1,
\end{equation*}
the approximate root $\bm\xi_0=(0.002,1.003,0.004)$
and threshold $\varepsilon= 0.01$.
In the following we use 32-digit arithmetic for all computations.

We shall first compute a primal basis using
Algorithm~\ref{AlINT}. In the first iteration we compute
the $3\times 3$  matrix $K_1=K_1(\bm\xi_0)$.
The elements in the kernel of this matrix consists of elements of the form
$\Lambda = \nu_{1}^1 d_1 + \nu_{1}^2 d_2 + \nu_{1}^3 d_3$.
The singular values of $K_1(\bm\xi_0)$ are
$(4.1421, 0.0064, 0.0012)$, which implies a two-dimensional kernel, since
two of them are below threshold $\varepsilon$.
The (normalized) elements in the kernel are
$\tilde \Lambda_2 = d_1 - 0.00117 d_2$ and
$\tilde \Lambda_3 = d_3 - 0.00235 d_2$.
Note that $d_2$ was not chosen as a leading term. This is due to pivoting used
in the numeric process, in order to avoid leading terms with coefficients
below the tolerance $\varepsilon$.
The resulting primal basis $B_{1}=\{1,x_{1},x_{3}\}$ turns out to be closed under derivation.

Similarly, in degree $2$ we compute one element
$\tilde \Lambda_4 =
d_1 d_3 - 0.00002 d_1^2 - 0.00235 d_1 d_2 + 5.5\cdot 10^{-6} d_2^2 - 0.00117\cdot d_2 d_3 - 0.00002 d_3^2 + 5.9\cdot 10^{-6}d_2
$.

In the next step, we have $\ker K_{3}=\{0\}$, since the minimum singular value
is $\sigma_{\min}=0.21549$,
therefore we stop the process, since the computed
dual is approximately complete (cf.~Definition~\ref{def:compl}).  We
derive that the approximate multiple point has multiplicity $r=4$ and
one primal basis is $B=\{1,x_1,x_3,x_1 x_3\}$.

The parametric form of a basis of $\D_{1}$  is
$\ker K_1  = \langle \Lambda_2=d_1 + \mu_{2,1} d_2, \Lambda_3=d_3 + \mu_{3,1} d_2 \rangle$.
Here we incorporated \eqref{eq:orth}, thus fixing some of the
parameters according to primal monomials $x_1$ and $\, x_3$.

The parametric form of the matrix $K_2(\bm\xi,\bm\mu)$
of the integration method at degree 2 is
{\footnotesize$$
\kbordermatrix{ &\nu_{1}^1 &\nu_{1}^2 &\nu_{1}^3 &\nu_{2}^1 &\nu_{2}^2 &\nu_{2}^3 &\nu_{3}^1 &\nu_{3}^2 &\nu_{3}^3 \\
\eqref{eq:commnew}& 0& 0& 0& 0& \textbf{0}& -\mu_{2,1\,}& \textbf{0}& \textbf{1}& -\mu_{3,1\,} \\
\eqref{eq:commnew}& 0& 0& 0& 0& \textbf{0}& -1& \textbf{1}& \textbf{0}& 0 \\
\eqref{eq:commnew} & 0& 0& 0& \mu_{2,1\,}& \textbf{-1}& 0& \bm{\mu_{3,1\,}}& \textbf{0}& 0 \\
\Lambda(f_1)& 3 \xi_1^2& 2 \xi_2& 2 \xi_3& 3 \xi_1& \mu_{2,1\,}& 0& 3 \xi_1& \mu_{3,1\,}& 1\\
\Lambda(f_2)& 2 \xi_1& 3 \xi_2^2& 2 \xi_3& 1& 3 \mu_{2,1\,} \xi_2& 0& 0& 3 \mu_{3,1\,} \xi_2& 1\\
\Lambda(f_3)& 2 \xi_1& 2 \xi_2& 3 \xi_3^2& 1& \mu_{2,1\,}& 0& 0& \mu_{3,1\,}& 3 \xi_3
} \, ,
$$}
where the columns correspond to the parameters in the expansion \eqref{eq:closed}:
{\scriptsize
\begin{eqnarray*}
&\Lambda_{4}  
=
\nu_1^1 d_1
+ \nu_1^2 d_2 + \nu_1^3 d_3
+ \nu_2^1 d_1^2  + \nu_2^2 (d_1d_2+\mu_{2,1\,} d_2^2)\nonumber\\
&
+ \nu_2^3 (d_1d_3 + \mu_{2,1\,} d_3 d_2)
+ \nu_3^1 (\mu_{3,1\,} d_1 d_2) + \nu_3^2 (\mu_{3,1\,} d_2^2) + \nu_3^3 (d_3^2+\mu_{3,1\,} d_2d_3)\nonumber
\end{eqnarray*}}
Setting $\Lambda_4(x_1x_3)=1$ and $\Lambda_4(x_1)=\Lambda_4(x_3)=\Lambda_4(1)=0$,
we obtain $\nu_{1}^{1}=\nu_{1}^{3}=0$ and $\nu_{2}^{3}=1$.
The dual element of order $2$ has the parametric form
\begin{align}
\Lambda_4 &=
d_1 d_3 +
\mu_{4,1\,} d_2 + \mu_{4,2\,} d_1^2 +
\mu_{4,3\,} d_1 d_2 +
\mu_{4,6\,} d_3^2 + \\
&+ (\mu_{2,1\,}  + \mu_{3,1\,}\mu_{4,6\,}) d_2  d_3
+ (\mu_{2,1\,} \mu_{4,4\,} +  \mu_{3,1\,} \mu_{4,5\,}) d_2^2 \nonumber
\end{align}
($\nu_{1}^{2}=\mu_{4,1}, \nu_{2}^{1}=\mu_{4,2},\nu_{2}^{2}=\mu_{4,3}, \nu_{3}^{1}=\mu_{4,4},
\nu_{3}^{2}=\mu_{4,5}, \nu_{3}^{3}=\mu_{4,6}$).
Overall 8 parameters are used in the representation of $\mathcal D_2$.

The highlighted entries of $K_2(\bm\xi,\bm\mu)$ form
the non-singular matrix $\Areg_2$ in Definition~\ref{def:minor-regular}, therefore $\D_2$ is regular for $B$ (cf. Definition~\ref{def:non-deg}).
We obtain the polynomial parameterization
$\mu_{4,3}=\mu_{2,1\,} \mu_{4,2\,} + \mu_{3,1\,}, \mu_{4,4}=1, \mu_{4,5\,}=\mu_{2,1\,}  + \mu_{3,1\,} \mu_{4,6\,}$
with the free parameters $\bar{\mu}=(\mu_{2,1}, \mu_{3,1}, \mu_{4,1},$ $\mu_{4,2}, \mu_{4,6})$. There is no denominator since $\det \Areg_{2}=1$.

We now setup the numerical scheme. The overdetermined and deflated system
$F(\xb, \bm \mu)$ consists of $15$ equations:

\noindent$
\mu_{2,1} \mu_{4,2} + \mu_{3,1} - \mu_{4,3} \ ,
-\mu_{4,4} + 1 \ ,
-\mu_{2,1} \mu_{4,4} - \mu_{3,1} \mu_{4,6} + \mu_{4,5},$ \\
$\Lambda_1(f_1) {=} f_1,  
\Lambda_1(f_2) {=} f_2, 
\Lambda_1(f_3) {=} f_3, 
\Lambda_2(f_1) {=} 2 \mu_{2,1} x_2 + 3 x_1^2 \ ,$  \\
$
\Lambda_2(f_2) {=} 3 \mu_{2,1} x_2^2 + 2 x_1,
\Lambda_2(f_3) {=} 2 \mu_{2,1} x_2 + 2 x_1,
\Lambda_3(f_1) {=} 2 \mu_{3,1} x_2 + 2 x_3, $ \\
$\Lambda_3(f_2) {=} 3 \mu_{3,1} x_2^2 + 2 x_3 \ ,
\Lambda_3(f_3) {=} 2 \mu_{3,1} x_2 + 3 x_3^2 \ , $ \\
$\Lambda_4(f_1) {=} \mu_{2,1} \mu_{4,3} {+} \mu_{3,1} \mu_{4,5} {+} 2 \mu_{4,1} x_2 {+} 3 \mu_{4,2} x_1 {+} \mu_{4,6} \ ,  $\\
$\Lambda_4(f_2) {=} 3 \mu_{2,1} \mu_{4,3} x_2 {+} 3 \mu_{3,1} \mu_{4,5} x_2 {+} 3 \mu_{4,1} x_2^2 {+} \mu_{4,2} {+} \mu_{4,6} \ ,  $\\
$\Lambda_4(f_3) {=} \mu_{2,1} \mu_{4,3} {+} \mu_{3,1} \mu_{4,5} {+} 2 \mu_{4,1} x_2 {+} 3 \mu_{4,6} x_3 {+} \mu_{4,2}$

We now consider $J_F(\bm\xi_0,\bm\mu_0)$. This Jacobian is of full rank, and we can
obtain a maximal minor by
removing $\Lambda_{1}(f_2), \Lambda_{1}(f_3), \Lambda_2(f_3)$ and $\Lambda_3(f_3)$ from $F$. We obtain the square $11\times 11$ system denoted by $F_0$.

The initial point of the Newton iterations is
$\bm \xi_{0}=(0.002,1.003,0.004)$ and the approximation of the variables $\mu_{i,j}$ provided
by the numerical integration method:
$
\bm\mu_0 =
(
-0.00117{,} -0.00235{,} 5.9\cdot 10^{-6}{,} -0.00002{,}
-0.00235{,} 1.0{,} -0.00117{,}
-0.00002) \, .
$

We now use Theorem~\ref{thm:final} to certify the convergence to a
singular system.  We can compute for $(\bm\xi_0,\bm\mu_0)$ the value
$\atbeta\approx 0.01302$.  Moreover, $\sigma_{\min}(\Acomp(\bm\xi_0,\bm\mu_0))=
0.21549$ and the minimum singular value of the highlighted submatrix
of $K_2(\bm\xi_0,\bm\mu_0)$ is equal to one. Therefore $\atbeta$ is at
least one order of magnitude less than both of them, which is
sufficient, since the involved Lipschitz and $\atgamma$ constants are of the order of $1$
for the input polynomials.  In the first iteration we obtain
$\atbeta\approx 0.00011$ which clearly indicates that we are in the
region of convergence.  Indeed, the successive residuals for $4$
iterations are
$
0.00603, 4.0 \cdot 10^{-5} , 2.07\cdot 10^{-9}, 8.6\cdot 10^{-18}, 3.55\cdot 10^{-35} .
$
Clearly, the residual shrinks with a quadratic rate\footnote{The convergence is seen up to machine error. If we increase the accuracy to 150 digits the rate remains quadratic for 7 iterations:
$\dots
3.55\cdot 10^{-35}, 6.78\cdot 10^{-70}, 4.15\cdot  10^{-140}, 5.1\cdot  10^{-281} \, .
$
}.
We obtain $\bm\xi_4= (1.8\cdot 10^{-37} , 1.0, 2.8\cdot  10^{-36})$ and
the overdetermined system is satisfied by this point:
$\| F(\bm\xi_4,\bm\mu_4) \|_{\infty} =  8\cdot 10^{-35}$;
the resulting dual structure is $\D^{*}_2 = \{1, d_1,d_3,d_1 d_3\}$.

\end{example}
\if 0
\begin{example}\label{ex0} 
We consider the system
$$
f_1 =  {x_{{1}}}^{2}+x_{{1}}-x_{{2}} \quad,\quad f_2= {x_{{2}}}^{2}+x_{{1}}-x_{{2}}
$$
and the (approximate) three-fold root $\xi_0=(0.001, -0.002)$.
The integration method identifies a primal basis $B=\{1,x_1,x_1^2\}$,
approximate dual coefficients
$\mu_0=(0.9990005, 0.99800067, 0.99900316)$
and respective dual functionals
$$
\bm 1,\, d_1 + 0.99900050 d_2,\,
d_1^2 +  0.9990005 d_1 d_2 + 0.99999734 d_2^2 + 0.99800067 d_2
$$

Now we construct the equations \eqref{eq:commnew} by considering
$$
\Lambda_1 = \bm 1,\, \Lambda_2 =  d_1 + \mu_{1,1} d_2,\,
\Lambda_3 =  d_1^2 + \mu_{1,1} d_1 d_2 + \mu_{2,1} d_2 + \mu_{2,2}\mu_{1,1} d_2^2
$$
which yields the stability condition
$
\mu_{1,1} - \mu_{2,2} = 0 \, .
$
Note that the $\Lambda_i$ already incorporate the constraints \eqref{eq:orth}.

The resulting overdetermined system
\begin{align*}
  F &= (\, \mu_{1,1} - \mu_{2,2}\bm{,}\, \Lambda_1(\fb), \Lambda_2 (\fb), \Lambda_3(\fb) \,)\\
  &=(
  \mu_{1,1} - \mu_{2,2},
  f_1(\xb), f_2(\xb),
2 x_{1}+1-\mu_{1,1},
1-\mu_{2,1},
\mu_{2,2}\mu_{1,1} + 2\mu_{2,1}x_2-\mu_{2,1} )
\end{align*}
consists of 7 equations and 5 variables. Therefore two equations will
be purged, to obtain a square system. The stability condition (first
equation) will be kept. To purge the equations we typically perform a
rank test, using a QR decomposition of (the transpose of) the Jacobian
matrix
$$
\kbordermatrix{ & x_1& x_2& \mu_{1,1}& \mu_{2,1}& \mu_{2,2}\\
& 0&0  & 1&0&-1\\
\Lambda_1(f_1)  &2\, x_{{1}}+1&-1&0&0&0\\
\Lambda_1(f_2)  &1&2\,x_{{2}}-1  &0&0&0\\
\Lambda_2(f_1)&2&0&-1&0&0\\
\Lambda_2(f_2)&0&2& 2\,x_{{2}}-1&0&0\,\mu_{1,1}\\
\Lambda_3(f_1)&0&0&0&-1&0\\
\Lambda_3(f_2)&0&2\,\mu_{2,1}&\mu_{2,1}&2\,x_{{2}}-1&\mu_{1,1}
}
$$
One possible choice is to purge the lines corresponding to
$\Lambda_{1}(f_1)$ and $\Lambda_{1}(f_2)$.
This way we obtain a square system, on which we
apply Newton's method, to
compute $150$ digits of the solution $(\xi,\mu)$
with initial point
\begin{equation*}
(\bx_0\bm{,}\, \mu_0) =
(0.001, -0.002\bm{,}\, 0.9990005, 0.99800067, 0.99900316)
\end{equation*}
We setup floating point arithmetic with 150 decimal digits.
The successive residual norms $||(\mu_k,\xi_k)||_\infty$
in Newton's iteration are
$$
0.003, 9\cdot10^{-6}, 1.343 10^{-11}, 4.513\cdot 10^{-23}, 5.093\cdot 10^{-46}, 6.486\cdot 10^{-92}, 3\cdot 10^{-150}
$$
Indeed, the number of correct digits in the result doubles in each
iteration, and it increases from 2 to 150 decimals after 7 iterations. We obtain
the point
$(\xi_7,\mu_7)= (-7.328\cdot 10^{-151}, -7.328\cdot 10^{-151}\bm{,}\,1,1,1)$.

The full residual (i.e. the residual of the overdetermined system $F$) is
$$
F(\xi_7,\mu_7) = (0,4\cdot 10^{-300}, 3\cdot 10^{-300}, 0, 10^{-150}, 0, -2\cdot 10^{-150} ) \,.
$$
We can see that $f_1(\xi_7)\approx 4\cdot 10^{-300}$ and $f_2(\xi_7)\approx 3\cdot 10^{-300}$,
which implies (cf. Th~\ref{th:pert}) that we have indeed a three-fold root at the origin,
described by three dual functionals: $\bm 1,\, d_1 + d_2,\,d_1^2 +  d_1 d_2 + d_2^2 + d_2$.
\end{example}
\fi

\begin{example} 
We demonstrate how our method handles
inaccuracies in the input, and recovers a nearby
system with a true multiple point. Let
  $$
  f_1 =  {x_{{1}}}^{2}+x_{{1}}-x_{{2}}+0.003 \quad,\quad f_2= {x_{{2}}}^{2}+1.004x_{{1}}-x_{{2}} .
  $$
  There is a cluster of three roots around
  $\bm\xi_0=(0.001, -0.002)$. Our goal is to squeeze the cluster down to
  a three-fold real root. We use 32 digits for the computation.
  Starting with $\bm\xi_0$, and a tolerance equal to $10^{-2}$ Algorithm~\ref{AlINT}
  produces an approximate dual
$\bm 1,\, d_1 + 1.00099651 d_2,\,
d_1^2 + 1.00099651 d_1 d_2 + 1.00266222 d_2^2 + 0.99933134 d_2 $ and
identifies the primal basis
$B=\{1,x_1,x_1^2\}$ using pivoting on the integration matrix.
The sole stability condition reads
$\mu_{1,1} - \mu_{2,2} = 0$, and
$\Lambda_1 = \bm 1,\, \Lambda_2 =  d_1 + \mu_{1,1} d_2,\,
\Lambda_3 =  d_1^2 + \mu_{1,1} d_1 d_2 + \mu_{2,1} d_2 + \mu_{2,2}\mu_{1,1} d_2^2$.

  The nearby system that we shall obtain is deduced by the residue in
  Newton's method.  In particular, starting from $\bm\xi_0$, we consider the square system
  given by removing the equations $\Lambda_{1}(f_1)=0$ and $\Lambda_2(f_2)=0$.
  The rank of the corresponding Jacobian matrix remains maximal, therefore such a
  choice is valid. Newton's iterations converge quadratically to the point
  $(\bm\xi_5,\bm\mu_5)=(1.1\cdot 10^{-33},1.2\cdot 10^{-33},1,1,1)$.  The full residual is now
  $$
  F(\bm\xi_5,\bm\mu_5) = (0, 0.003, -10^{-32}, 10^{-32}, 0.004, 0, 0 )\, .
  $$
  This yields a perturbation $\tilde f_1 \approx f_1 - 0.003$ and $\tilde
  f_2 \approx f_2 - 0.004 (x_1-\xi^{*}_{1})$ to obtain a system with an exact multiple
  root at the origin (cf. Th.~\ref{th:pert}).
  Of course, this choice of the square sub-system is not
  unique. By selecting to remove equations $\Lambda_{1}(f_1)=0$ and $\Lambda_{1}(f_2)=0$ instead,
  we obtain $(\bm\xi_5,\bm\mu_5)=(0.00066578, -0.00133245, 1.001, 1.0,
  1.001)$ and the residual $F(\bm\xi_5,\bm\mu_5) = (0, 0.005, 0.002, 0, 0,
  0, 0)$, so that the nearby system
  $$
  f^{*}_1 \approx  {x_{{1}}}^{2}+x_{{1}}-x_{{2}}+0.008,\quad f^{*}_2\approx {x_{{2}}}^{2}+1.004x_{{1}}-x_{{2}} + 0.002 \,
  $$
  has a singularity at the limit point $\bm\xi^{*}\approx
  (0.00066578, -0.00133245)$ described locally by the
  coefficients $\bm\mu^{*}\approx (1.001,1.0,1.001)$.

   Finally, consider the two square sub-systems as above, after changing
   $f_1,f_2$ to define an \emph{exact three-fold root} at the origin
   (i.e. $f_1={x_{{1}}}^{2}+x_{{1}}-x_{{2}},\,
   f_2={x_{{2}}}^{2}+x_{{1}}-x_{{2}}$). Newton's
   iteration with initial point $\bm\xi_0$ on either deflated system converges quadratically to
   $(\bm\xi,\bm\mu)=(\bm 0,\bm 1$).
   This is a general property of the method: exact
   multiple roots and their structure are recovered by this process if $\bm\xi_0$ is
   a sufficiently good initial approximation (cf. Section~\ref{sec:Newton}). We plan to develop this aspect
   further in the future.
\end{example}
\if 0
\begin{example} 
  Let
  $$
  f_1 = (x-y)^3 - z^2,\,
  f_2 = (x-z)^3 - y^2,\,
  f_3 = (y-z)^3 - x^2,\,
  $$
  The system has a root of multiplicity $10$ at the origin.
  The integration method identifies the primal
  $B=\{1,x,y,z,xy,xz,yz,xyz\}$, starting from the approximation
  $\xi=(0.01,0.02,.03)$. The dual basis consists of the duals of these very same monomials.
  We can trivially check that the set is stable under derivation. Therefore we
  form a new overdetermined system of size $24\times 3$  without introducing any new parameters.
  We then extract the following subsystem, which has full Jacobian rank:
  $$
  (d_z f_1,\, d_y f_2,\, d_x f_3) = (-2 z,\, -2y,\, -2x )
  $$
  and deflates the root.
\end{example}

\begin{example} 
  Let
  $$
  f_1 = x^3 - yz,\,
  f_2 = y^3 - xz,\,
  f_3 = z^3 - xy,\,
  $$
  The system has a root of multiplicity $11$ at $\xi^*=(0,0,0)$.
  We assume that the approximate point $\xi_0=(0.04,-0.03, 0.03)$
  is available.

\end{example}
\fi

\begin{example}
We show some execution details on a set of benchmark examples in taken
from \cite{zeng05}, see also~\cite{mm2014}.
For this benchmark, we are given systems and points with
multiplicities.  We perturb the given points with a numerical perturbation of
order $10^{-2}$.
We use double precision arithmetic and setup Newton's
iteration; with less than $10$ iterations, the root was approximated
within the chosen accuracy.

In Table~\ref{tab:bench}, ``IM'' is the maximal size of the (numeric)
integration matrix that is computed to obtain the multiplicity,
``$\#\mu$'' is the number of new parameters that are needed for
certified deflation, ``SC'' is the number of stability constraints
that were computed  and ``OS'' stands for the size of the
overdetermined system (equations $\times$ variables).  This is the
size of the Jacobian matrix that must be computed and inverted in each
Newton's iteration.
We can observe that the number of parameters required can grow
significantly. Moreover, these parameters induce non-trivial denominators
in the rational functions $q_{\beta_j,\alpha}(\barmu)$ of Prop.~\ref{prop:param}.
for the instances cmbs1, cmbs2 and KSS.


\def\arraystretch{.88}
\begin{table}[ht]
\begin{center}
  \begin{tabular}{|l|c|c|c|c|c|c|c|c|}\hline
System  &$r /  n$ & IM & SC& $\#\mu$  &  OS   \\ \hline 
cmbs1   & 11/3 &$27\times 23$ &75 & 74&  $108\times 77 $  \\ \hline 
cmbs2   & \ 8/3  &$21 \times 17$& 21 &33  & $45\times 36$    \\ \hline
mth191  &\ 4/3  &$10 \times 9$ & 3 & 9 &  $15\times 12$   \\ \hline
decker2 &\ 4/2  &$5 \times  5$ &4 &8 &  $12\times 10$       \\ \hline
Ojika2  &\ 2/3  &$6 \times  5$ &0 &2 & $6\times 5$          \\ \hline
Ojika3  &\ 4/3  &$12 \times 9 $&15 &14 &$27\times 17$      \\ \hline
KSS     & 16/5 &$155\times 65$&510 &362 & $590 \times 367$ \\ \hline
Capr.&\ 4/4  &$22 \times 13$& 6  &15 &  $ 22\times 19$      \\ \hline
Cyclic-9&\ 4/9  &$104 \times 33 $ &36  &40 &  $ 72\times 49$   \\ \hline
\end{tabular}
\end{center}
\caption{Size of required matrices and parameters for deflation.
}
\label{tab:bench}
\end{table}

\end{example}

\subsubsection*{Acknowledgments}
This research was partly supported by the H2020-MSCA-ITN projects
GRAPES (GA 860843) and POEMA (GA 813211) and the NSF grant
CCF-1813340.

\bibliographystyle{acm}

\bigskip
\end{document}